\newcommand{\RR}{\ensuremath{\mathbb{R}}}
\newcommand{\CC}{\ensuremath{\mathbb{C}}}
\newcommand{\me}{\ensuremath{\mathrm{e}}}
\newcommand{\pd}{\ensuremath{\partial}}
\newcommand{\spm}{{\ensuremath{{\scriptscriptstyle\pm}}}}
\renewcommand{\sp}{{\ensuremath{{\scriptscriptstyle +}}}}
\newcommand{\sm}{{\ensuremath{{\scriptscriptstyle -}}}}
\newcommand{\beq}{\begin{equation}}
\newcommand{\eeq}{\end{equation}}
\newcommand{\beqn}{\begin{equation*}}
\newcommand{\eeqn}{\end{equation*}}
\newcommand{\ben}{\begin{enumerate}}
\newcommand{\een}{\end{enumerate}}
\newcommand{\lam}{\ensuremath{\lambda}}
\newcommand{\dif}{\ensuremath{\mathrm{d}}}
\newcommand{\mi}{\ensuremath{\mathrm{i}}}
\renewcommand{\vec}[1]{\ensuremath{\bm{#1}}}
\newcommand{\norm}[2]{\ensuremath{\|#1\|_{#2}}}
\newcommand{\jump}[1]{\ensuremath{[#1]}}
\DeclareMathOperator{\dv}{div}
\DeclareMathOperator{\grad}{grad}
\DeclareMathOperator{\sgn}{sgn}
\DeclareMathOperator{\re}{Re}
\renewcommand{\Re}{\re}
\DeclareMathOperator{\im}{Im}
\renewcommand{\Im}{\im}
\DeclareMathOperator{\sech}{sech}
\newcommand{\tr}{\ensuremath{\mathrm{t}}}
\theoremstyle{plain}
\newtheorem{theorem}{Theorem}
\newtheorem{lemma}[theorem]{Lemma}
\newtheorem{proposition}[theorem]{Proposition}
\theoremstyle{definition}
\newtheorem{definition}[theorem]{Definition}
\newtheorem{example}[theorem]{Example}
\theoremstyle{remark}
\newtheorem{remark}[theorem]{Remark}
\newtheorem{note}[theorem]{Remark}
\numberwithin{equation}{section}
\begin{document}
\title[Refined stability]{Computing the refined stability condition}
\author{Nicholas B. Anderson}
\address{Department of Mathematics\\ University of Wyoming\\ Laramie, WY 82071-3036}
\email{nanders5@uwyo.edu}
\author{Allison M. Lindgren}
\address{Department of Mathematics\\ University of Wyoming\\ Laramie, WY 82071-3036}
\email{alindgr1@uwyo.edu}
\author{Gregory D. Lyng}
\address{Department of Mathematics\\ University of Wyoming\\ Laramie, WY 82071-3036}
\email{glyng@uwyo.edu}

\date{Last Updated: \today}
\keywords{Evans function, multidimensional viscous conservation laws, spectral stability}
\subjclass{35P15, 35B40, 47A45}
\begin{abstract}
The classical (inviscid) stability analysis of shock waves is based on the Lopatinski\u\i\ determinant, $\Delta$---a function of frequencies whose zeros determine the stability of the underlying shock. A careful analysis of $\Delta$ shows that in some cases the stable and unstable regions of parameter space are separated by an open set of parameters. Zumbrun and Serre [\emph{Indiana Univ. Math. J.}, \textbf{48} (1999) 937--992] have shown that, by taking account of viscous effects not present in the definition of $\Delta$, it is possible to determine the precise location in the open, neutral set of parameter space at which stability is lost. In particular, they show that the transition to instability under suitably localized perturbations is determined by an ``effective viscosity'' coefficient. Here, in the simplest possible setting, we propose and implement two new approaches toward the practical computation of this coefficient. Moreover, in a special case, we derive an exact solution of the relevant differential equations.
\end{abstract}
\maketitle

\section{Introduction}
\subsection{Inviscid stability}
Consider a planar shock solution,
\beq\label{eq:shock_soln}
u(\vec{x},t)=\begin{cases} u_\sp\,, & x_1>st \\ u_\sm\,, & x_1<st \end{cases}\,,
\eeq
of a hyperbolic system of conservation laws in $d$ space dimensions:
\beq\label{eq:general_claw}
\pd_t u +\sum_{j=1}^d \pd_{x_j}f^j(u)=0\,.
\eeq
Here, the unknown $u$ belongs to $\mathscr{U}\subset\RR^n$, the state space, and is a function of space $\vec{x}=(x_1,\ldots,x_d)\in\RR^d$ and time $t\in\RR$. The fluxes $f^j$ are $\RR^n$-valued functions on $\mathscr{U}$. In \eqref{eq:shock_soln}, $u_\spm$ are constant states; they are related to the shock speed $s$ by the Rankine--Hugoniot condition
\[
s\jump{u}=\jump{f^1(u)}\,,
\]
where---here and below---square brackets indicate the jump. That is, for any function $h$ of the state $u$, $\jump{h(u)}:=h(u_\sp)-h(u_\sm)$. The linear stability analysis of such solutions is now classical; it originates in the studies of pioneers D{\cprime}yakov~\cite{D_JETP54} and Erpenbeck~\cite{E_PF62}. We recall that the the centerpiece of the analysis is the \emph{Lopatinski\u{\i} determinant}, 
\[
\Delta:\{\lambda\in\CC\,:\re\lambda\geq 0\}\times\RR^{d-1}\to\CC\,,
\]
a function of frequencies $(\lambda,\tilde\xi)$, where $\lambda=\gamma+\mi\tau\in\CC$ is dual to time and $\tilde\xi\in\RR^{d-1}$ is dual to the transverse spatial directions. Zeros of $\Delta$ with $\gamma=\re\lambda>0$ correspond to perturbations which grow exponentially in time, and, evidently, nonvanishing of $\Delta$ on $\{\gamma>0\}$ is necessary for (linearized, inviscid) stability of the shock\footnote{Indeed, $\Delta$ is homogenous degree one. Thus, any unstable zero generates instabilities of all orders. That is, any such instability is of Hadamard type; these instabilities are so violent that these waves will never be seen in practice.}. 

A particularly important example occurs when \eqref{eq:general_claw} is the Euler equations of gas dynamics and the solution \eqref{eq:shock_soln} represents a planar gas-dynamical shock wave. In this example, the vector $u$ would have as its components the mass density, the momentum densities in each of the $d$ spatial directions, and the total energy density. In this case, an analysis of $\Delta$, see \cite{B-GS_book} or the appendix of \cite{Z_handbook04}, yields clean, explicit stability criteria in terms of the basic properties of the shock and the gas\footnote{The stability criteria are, naturally, formulated in terms of the end states $u_\spm$ of the shock and in terms of the equation of state.} in question. However, Majda \cite{M_book} has observed that in the setting of gas dynamics it sometimes happens that $\Delta$ vanishes on the boundary $\{\gamma=0\}$ but not for $\gamma>0$, and in this case we say that the shock is \emph{neutrally} or \emph{weakly} stable. Moreover, this neutral stability can persist on an open set in parameter space. Physically, the presence of neutral zeros of $\Delta$ corresponds to surface or boundary waves, and these zeros are associated with a loss of smoothness of the perturbed shock front. Further complicating matters, Barmin \& Egorushkin \cite{BE_AM92} have pointed out that experimentally observed instabilities of gas-dynamical shocks sometimes occur within the region of weak stability. That is, instabilities are sometimes observed  \emph{in the interior} of the weakly stable regime. They postulated that the inclusion of nonlinear effects and/or other neglected physical effects would be required for the theory to capture the phenomena revealed by these experiments.

\subsection{Viscous stability}
By considering the viscous regularization of \eqref{eq:general_claw},
\beq\label{eq:general_visc}
\pd_t u +\sum_{j=1}^d\pd_{x_j} f^j(u)_{x_j}=\nu\sum_{j,k=1}^d \pd_{x_j}(B^{jk}(u)\pd_{x_k}u)\,,
\eeq
Zumbrun \& Serre \cite{ZS_IUMJ99} have shown that the precise location of the transition to instability is entirely determined by viscous effects which are neglected in the construction of $\Delta$. See also the more recent paper of Benzoni-Gavage, Serre, \& Zumbrun \cite{B-GSZ_ZAA08}; this latter paper forms the foundation of the calculations we present here\footnote{Also, the original, one-dimensional ($d=1$) derivation appears in \cite{B-GSZ_SIAMJMA01}.}.  As a point of reference, we note that in the aforementioned example of gas dynamics, equation \eqref{eq:general_visc} would correspond to the Navier--Stokes equations of compressible gas dynamics; the second-order terms on the right-hand side of \eqref{eq:general_visc} model the effects of viscosity and heat conductivity. 

We associate to the solution \eqref{eq:shock_soln} of the inviscid equations \eqref{eq:general_claw} a planar viscous profile for \eqref{eq:general_visc}. The viscous profile is a solution of \eqref{eq:general_visc} of the form
\beq\label{eq:vp}
u(\vec{x},t)=\bar u\left(\frac{x_1-st}{\nu}\right)\,,\quad \lim_{z\to\pm\infty}\bar u(z)=u_\spm\,.
\eeq
Zumbrun \& Serre's derivation is based on a low-frequency analysis of the Evans function, $D(\lam,\tilde\xi)$, associated with \eqref{eq:vp}. Analogous to the Lopatinski\u\i\ determinant, the Evans function is a spectral determinant; its zero set carries stability information for the planar viscous shock wave. In particular, the refined stability condition they derive is given in terms of the sign of the real part of a coefficient called $\beta$; see \eqref{eq:beta} below. That is, 
\[
\sgn\Re\beta>0 
\]
is a necessary condition for weak viscous stability.
Our principal aim here is to explore, in a simplified setting, the effective, practical computation of $\beta$. As pointed out by Benzoni-Gavage, Serre, \& Zumbrun \cite{B-GSZ_ZAA08}, the fundamental challenge for computing $\beta$ is the numerical approximation of the function $\tilde y$, the solution of an appropriate differential equation; see \eqref{eq:y} below. The determination of $\Re\sgn\beta$ has been identified as an important open problem for physical systems due its possible role as a signal for the onset of complex behavior \cite{Z_kochel}; see also Zumbrun's more recent work on the role of the refined stability condition in the development of cellular instabilities for shock waves \cite{Z_PD10}.
\subsection{Plan}
In \S\ref{sec:prelim} we describe the framework that we use as a testbed for computing $\tilde y$ (and therefore $\beta$). In particular, we restrict our attention to the case of a scalar conservation law with viscosity in two space dimensions. In \S\ref{sec:evans}, for completeness and to make the exposition here mostly self contained, we recapitulate the derivation of $\beta$ and related Evans-function analysis from \cite{B-GSZ_ZAA08}; notably, because we have restricted ourselves to the scalar case, our derivation is substantially streamlined from the general calculation presented there. In \S\ref{sec:compute} we make the principal contribution of this paper. We propose two distinct approaches toward computing $\tilde y$, and we implement these approaches in several concrete example problems. In a special case, we are able to find an exact solution. We use this exact solution to validate our numerical approximations. Finally, in \S\ref{sec:conclude} we actually compute $\beta$ and discuss the steps that will be required to compute $\beta$ using these or similar techniques in the interesting (and physically relevant!) system case. These calculations represent, to the best of our knowledge, the first ever calculations of the refined stability condition in any setting.

\section{Preliminaries}\label{sec:prelim}
\subsection{Model}
We consider the simplest possible scenario of interest, a single conservation law with viscosity in two space dimensions:
\beq
\label{eq:law}
\pd_t u +\pd_{x_1}f^1(u)+\pd_{x_2}f^2(u) = \nu(\pd_{x_1}^2u+\pd_{x_2}^2u)\,.
\eeq
Here, $u$ is real valued, $\vec{x}=(x_1,x_2)\in\RR^2$, and $t\in\RR$ represents time. 
%
We shall write $a^j(u):=\frac{\dif f^j}{\dif u}(u)$. The parameter $\nu>0$ is the viscosity; for simplicity we shall take $\nu=1$. Equation \eqref{eq:law} is the viscous correction of the hyperbolic conservation law 
\beq\label{eq:hcl}
\pd_tu+\pd_{x_1}f^1(u)+\pd_{x_2}f^2(u)=0\,.
\eeq
Evidently, \eqref{eq:hcl} is hyperbolic since the $1\times 1$ matrix $\mathscr{A}(u;\vec{\omega})=a^1(u)\omega_1+a^2(u)\omega_2$ is always real for $(u,\vec{\omega})\in\RR\times \RR^2$. As is well known, such systems support discontinuous solutions (shocks), and we consider here the simplest possible shock solution, a planar shock wave connecting constant states. That is, we suppose as above that 
\beq\label{eq:shock}
\check u(\vec{x},t)=\begin{cases} u_\sp\,, & x_1>st \\ u_\sm\,, & x_1<st \end{cases}
\eeq
is a solution of \eqref{eq:hcl}. We recall that, in order for $\check u$ to be a weak solution of \eqref{eq:hcl},
$u_\spm$ and $s$ must satisfy the Rankine--Hugoniot condition
\beq\label{eq:rh}
s\jump{u}=\jump{f^1(u)}\,.
\eeq
\subsection{Lopatinski\u\i\ determinant}
Applying the classical stability analysis to $\check u$ amounts to the algebraic construction of the Lopatinski\u\i\ determinant $\Delta$.   
In general, i.e., in the case of \eqref{eq:general_claw}, $\Delta$ is constructed from a jump term 
\[
\lambda\jump{u}+\mi\sum_{j=2}^d\xi_j\jump{f^j(u)}
\]
together with bases for the stable ($-$) and unstable ($+$) subspaces of the matrices
\beq\label{eq:calA}
\mathcal{A}_\spm(\lambda,\tilde\xi)=\left(\lambda I+\mi\sum_{j=2}^d\xi_j\dif f^j(u_\spm)\right)(\dif f^1(u_\spm))^{-1}\,.
\eeq
For a Lax $p$-shock, the dimensions work out precisely since in that case
\[
\dim E^s(\mathcal{A}_\sm)=p-1\,,
\quad
\dim E^u(\mathcal{A}_\sp)=n-p\,.
\]
However, in our setting ($n=1$, $p=1$), the Lopatinski\u\i\ determinant consists only of the jump term, and 
we can write it down explicitly as
\beq\label{eq:lop}
\Delta(\lambda,\xi)=\lambda\jump{u}+\mi\xi\jump{f^2(u)}\,.
\eeq
First, observe that provided $\jump{u}\neq 0$, there are no zeros of $\Delta$ with $\Re\lambda>0$. 
Evidently, given a pair of states $u_\spm$ and a flux function $f^2$, it is always possible to find a purely imaginary value of $\lambda$ for which $\Delta$ vanishes. Namely, one can simply take 
\[
\lambda=-\mi\xi\frac{\jump{f^2(u)}}{\jump{u}}\,.
\]
The fact that there is a whole line of neutral zeros parametrized by $\xi$ is a manifestation of the homogeneity of $\Delta$. 
\begin{remark}
The above calculation shows that planar shocks for \eqref{eq:hcl} are \emph{always} weakly stable; see  the discussion in \cite{B-GS_book}. Thus, in the current setting, the goal of computing of $\beta$ could be regarded as artificial because it does not---in this case---serve to locate the transition point in parameter space separating the (strongly) stable and unstable regions. On the other hand, the simple form of $\Delta$ in \eqref{eq:lop} and the resulting abundance of weakly stable shocks makes this setting ideal for testing various approaches to the computation of $\beta$. The extension of these ideas to a physically relevant case with $n\geq 2$ is part of our ongoing work \cite{ABLLMSX}; in \S\ref{ssec:discuss} we indicate some features of our computations which might be useful in the setting of systems. 
\end{remark}

\subsection{Traveling-wave solutions}
We now turn to the equation with viscosity.
Recall, we have set $\nu=1$, and we seek solutions of \eqref{eq:law} of the form
\beq\label{eq:twa}
u(\vec{x},t)=\bar u(x-st)\,,\quad\lim_{z\to\pm\infty}\bar u(z)=u_\spm\,.
\eeq
We write $z=x-st$, and we note that the traveling-wave ansatz \eqref{eq:twa} reduces the partial differential equation \eqref{eq:law} to 
\beq\label{eq:tw1}
-s\frac{\dif \bar u}{\dif z}+\frac{\dif}{\dif z}f^1(\bar u) = \frac{\dif^2 \bar u}{\dif z^2}\,. 
\eeq
Integrating \eqref{eq:tw1} once, we find 
\beq\label{eq:tw2}
-s(\bar u-u_\sm)+f^1(\bar u)-f^1(u_\sm)=\bar u'\,,\quad '=\frac{\dif}{\dif z}\,.
\eeq 
A necessary condition for the existence of a traveling-wave profile is that both states $u_\spm$ be equilibria of the equation \eqref{eq:tw2}. Evidently, $u_\sm$ is an equilibrium. 
From the requirement that $u_\sp$ be an equilibrium we recover the Rankine--Hugoniot condition \eqref{eq:rh}:
\[
\jump{f^1(u)}=s\jump{u}\,.
\]

\begin{example}[Burgers flux]
In the case that $f^1(u)=u^2/2$, the Rankine--Hugoniot condition \eqref{eq:rh} reduces to 
\[
s=\frac{1}{2}(u_\sp+u_\sm)\,,
\]
so the shock speed $s$ is simply the average of the values of the end states. We take
\[
u_\sm=1\,,\quad u_\sp=-1
\]
so that $s=0$. Equation \eqref{eq:tw2} reduces in this case to 
\beq\label{eq:tw-burgers}
\bar u'=\frac{1}{2}(\bar u^2-1)\,,
\eeq
and a straightforward and well-known calculation shows that 
\(
\bar u(z)=-\tanh(z/2).
\)
We shall use this exact solution of the profile equation \eqref{eq:tw-burgers} to validate our numerical calculations below. See also Example \ref{ex:exact} below.
\end{example}

We note that the two Lax shock conditions
\beq\label{eq:lax}
a^1(u_\sp)-s<0\,,\quad a^1(u_\sm)-s>0
\eeq 
guarantee that the equilibrium at $u_\sp$ is a stable node and that the equilibrium at $u_\sm$ is an unstable node. Thus, since the phase space for the nonlinear differential equation is one dimensional, provided that there are no equilibria between $u_\sp$ and $u_\sm$, existence of a monotone profile $\bar u$ is immediate. Moreover, the profile will approach its limiting values $u_\spm$ exponentially fast.

\begin{note}
By a modification of the flux $f^1$ if necessary, we may assume that $s=0$. We make this assumption throughout the remainder of the paper. Thus, from this point forward the profile $\bar u$ is a standing wave, a function of $x_1$ alone. 
\end{note}

\subsection{Linearization}
The first step in the stability analysis is to linearize \eqref{eq:law} about the standing-wave solution $\bar u$. We obtain
\beq\label{eq:linearv}
\pd_tv+\pd_{x_1}(a^1(\bar u) v)+\pd_{x_2}(a^2(\bar u)v)= \pd_{x_1}^2v+\pd_{x_2}^2v\,.
\eeq
Here, $v=v(\vec{x},t)$ denotes the perturbation, and equation \eqref{eq:linearv} describes the approximate (linear) evolution of the perturbation $v$. In particular, if the linearized equation supports solutions $v$ which grow in time, the solution $\bar u$ will be unstable. The linear equation \eqref{eq:linearv} has variable coefficients (since $\bar u$ is nonconstant), but these coefficients are functions of $x_1$ alone. Thus, we may take the Laplace transform in $t$ (dual variable $\lambda\in\CC$) and the Fourier transform in $x_2$ (dual variable $\xi\in\RR$). The transformed equation, with $w$ denoting the transformed perturbation, takes the form
\beq\label{eq:fl}
\lam w+(a^1(\bar u) w)'+\mi\xi a^2(\bar u)w= w''-\xi^2w\,.
\eeq
Here, $'$ denotes differentiation with respect to $x_1$. Indeed, from this point forward, we omit the superfluous subscript $1$ on $x_1$; it is the only surviving spatial variable. Thus, $x$ will denote the spatial coordinate normal to the unperturbed shock front. 
We think of \eqref{eq:fl} as a family of eigenvalue problems for the collection of linear operators $\mathscr{L}=\mathscr{L}(\xi)$ parametrized by $\xi\in\RR$ and defined by 
\beq
\mathscr{L}(\xi)w:=\big(w'-a^1(\bar u)w\big)'-\mi\xi a^2(\bar u)w-\xi^2w\,.
\eeq
We sometimes write $\mathscr{L}(\xi)w=\lambda w$ as a convenient shorthand for \eqref{eq:fl}.
Solutions of $\mathscr{L}(\xi)w=\lambda w$ which decay at $\pm\infty$ with $\Re\lambda>0$ correspond to perturbations which grow in time since $v$ can be recovered from $w$ via
\[
v(\vec{x},t)=\me^{\lambda t}\me^{\mi\xi x_2}w(x)\,.
\]
Evidently, since $\bar u(x)\to u_\spm$ as $x\to\pm\infty$, there are a pair of related, limiting operators 
\beq
\mathscr{L}_\spm(\xi)w:=w''-a^1(u_\spm)w'-\mi\xi a^2(u_\spm)w-\xi^2w\,.
\eeq
Notably, for every $\xi\in\RR$ the operators $\mathscr{L}_\spm(\xi)$ are constant-coefficient operators. Thus, they may be analyzed quite completely. This feature is an essential ingredient of the analysis. We also note for future reference that 
\[
\mathscr{L}(0)w=w''-(a^1(\bar u)w)'\,,
\]
and, evidently, from \eqref{eq:tw1} with $s=0$, we find  
\beq\label{eq:ubar}
\mathscr{L}(0)\bar u' = 0\,.
\eeq
Thus, $\bar u'$ is a decaying solution of \eqref{eq:fl} corresponding to $(\lambda,\xi)=(0,0)$.

\section{Evans function}\label{sec:evans}
\subsection{Evans function and low-frequency limit}\label{ssec:evanslf}
The Evans function is easily constructed in this case. We content ourselves with a mere outline of the procedure here. For more details in a setting which includes ours as a special case, see \cites{ZS_IUMJ99,B-GSZ_ZAA08}\,. First, we rewrite the eigenvalue problem $(\mathscr{L}(\xi)-\lambda) w=0$ as a first-order system of differential equations:
\beq\label{eq:eode}
W'=\mathbb{A}(x;\lambda,\xi)W\,.
\eeq
As above, we use $'$ to denote differentiation with respect to $x$, and we have written $W=(w,w')^\tr$. The coefficient matrix $\mathbb{A}(x;\lambda,\xi)$ is given by 
\beq
\mathbb{A}(x;\lambda,\xi)=
\begin{pmatrix}
0 & 1 \\
\lambda+\mi\xi a^2(\bar u)+\xi^2+a^1(\bar u)' & a^1(\bar u)
\end{pmatrix}\,.
\eeq
Corresponding to the constant coefficient operators $\mathscr{L}_\spm(\xi)$, there are constant coefficient first-order systems $\mathcal{W}'=\mathbb{A}_\spm(\lambda,\xi)\mathcal{W}$ with 
\beq\label{eq:apm}
\mathbb{A}_\spm(\lambda,\xi)=
\begin{pmatrix}
0 & 1 \\
\lambda+\mi\xi a^2(u_\spm)+\xi^2 & a^1(u_\spm)
\end{pmatrix}\,.
\eeq
The eigenvalues $\mu(\lambda,\xi)$ of $\mathbb{A}_\spm(\lambda,\xi)$ are roots of 
\beq\label{eq:cpoly}
\mu^2-\mu a^1(u_\spm)-\mi\xi a^2(u_\spm)-\xi^2-\lambda =0\,.
\eeq
Observe that as long as $\Re\lambda>0$ there can be no imaginary root of \eqref{eq:cpoly}. To see this, simply observe that if $\mu=\mi\eta$ for $\eta\in\RR$, then the left-hand side of \eqref{eq:cpoly} has as its real part the expression  
\[
-(\eta^2+\xi^2)-\lambda\,,
\]
which is clearly negative provided that $\Re\lambda>0$. Thus, the matrices $\mathbb{A}_\spm$ have no center subspace on $H=\{\Re\lambda>0\}\times\RR$, and one can therefore determine the dimensions of the stable and unstable subspaces of $\mathbb{A}_\spm$ by taking $\xi=0$. Consideration of \eqref{eq:cpoly} then shows that 
the stable and unstable subspaces of the two matrices each have dimension one on $H$. Of particular interest are the stable subspace $E^s(\mathbb{A}_\sp(\lambda,\xi))$ and the unstable subspace $E^u(\mathbb{A}_\sm(\lambda,\xi))$. The next lemma shows that there are solutions of the variable-coefficient problem \eqref{eq:eode} which asymptotically tend to zero (forward or backward in $x$) in the directions of these invariant subspaces.

\begin{lemma}
Fix a base point $P_\circ=(\lambda_\circ,\xi_\circ)\in\{\Re\lambda>0\}\times\RR$.
There exist solutions $W_\spm=W_\spm(x;\lambda,\xi)$ of \eqref{eq:eode} such that 
\[
W_\spm\to 0 \quad\text{as}\quad x\to\pm\infty.
\]
Local to $P_\circ$, these solutions are holomorphic in $\lambda$ and real analytic in $\xi$.
\end{lemma}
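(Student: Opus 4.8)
The plan is to construct the decaying solutions $W_\spm$ via the standard conjugation / contraction-mapping argument for asymptotically constant-coefficient ODE systems, exploiting the spectral-gap information already established for the limiting matrices $\mathbb{A}_\spm$. I will treat $W_\sm$ (decay as $x\to-\infty$, built from $E^u(\mathbb{A}_\sm)$); the case of $W_\sp$ is symmetric. First I would record that, since $\bar u(x)\to u_\spm$ exponentially as $x\to\pm\infty$ (monotone profile with hyperbolic endpoints, as noted after \eqref{eq:lax}), the coefficient matrix converges exponentially: $\|\mathbb{A}(x;\lambda,\xi)-\mathbb{A}_\sm(\lambda,\xi)\|\le C\me^{-\theta|x|}$ for $x\le 0$, uniformly for $(\lambda,\xi)$ in a small neighborhood $\mathcal{N}$ of $P_\circ$, with $\theta>0$ fixed (it can be taken to be, say, half the spectral gap of the profile equation at $u_\sm$). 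The hypotheses of the excerpt give that on $H=\{\Re\lambda>0\}\times\RR$ the matrices $\mathbb{A}_\spm$ are hyperbolic with one-dimensional stable and unstable subspaces, so shrinking $\mathcal{N}$ we get a uniform spectral gap: there is $\delta>0$ with $\Re\mu^u_\sm(\lambda,\xi)\ge\delta$ and $\Re\mu^s_\sm(\lambda,\xi)\le-\delta$ for $(\lambda,\xi)\in\mathcal{N}$, and the corresponding eigenprojections $\Pi^{u}_\sm,\Pi^s_\sm$ depend holomorphically on $\lambda$ and real-analytically on $\xi$ (they are given by a Riesz integral of the resolvent of the $2\times2$ matrix, whose entries are polynomial in $\mu$ and analytic in the parameters, and the eigenvalues are simple).

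Next I would set up the integral equation. Writing $V(x)=\me^{-\mu^u_\sm x}W(x)$ or, more robustly, factoring out the flow of the constant part, one seeks $W_\sm$ in the form $W_\sm(x)=\me^{x\mathbb{A}_\sm}\big(r + Z(x)\big)$ where $r$ spans $E^u(\mathbb{A}_\sm)$ and $Z$ is the correction. The Duhamel/variation-of-parameters formulation, splitting the kernel by the stable/unstable projections so that each piece is integrated in the direction in which the exponential factor decays, yields the fixed-point equation
\beq\label{eq:fixedpt}
Z(x)=\int_{-\infty}^{x}\me^{(x-y)\mathbb{A}_\sm}\Pi^u_\sm\,\mathbb{E}(y)\big(r+Z(y)\big)\,\dif y-\int_{x}^{0}\me^{(x-y)\mathbb{A}_\sm}\Pi^s_\sm\,\mathbb{E}(y)\big(r+Z(y)\big)\,\dif y\,,
\eeq
where $\mathbb{E}(y):=\mathbb{A}(y;\lambda,\xi)-\mathbb{A}_\sm(\lambda,\xi)$. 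On the weighted space $X=\{Z\in C((-\infty,0];\CC^2):\sup_{x\le0}\me^{-\eta x}|Z(x)|<\infty\}$ for a small $\eta\in(0,\min(\theta,\delta))$, the spectral gap makes $\me^{(x-y)\mathbb{A}_\sm}\Pi^u_\sm$ decay like $\me^{-\delta|x-y|}$ going forward and $\me^{(x-y)\mathbb{A}_\sm}\Pi^s_\sm$ decay like $\me^{-\delta|x-y|}$ going backward, while $\|\mathbb{E}(y)\|\le C\me^{\theta y}$ supplies integrability near $-\infty$; a routine estimate shows the right-hand side of \eqref{eq:fixedpt} is a contraction on a ball in $X$ once $\mathcal{N}$ is small enough (or, alternatively, after translating the base point far enough toward $-\infty$ and then extending the solution by the flow). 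This produces a unique $Z$, hence $W_\sm$, with $W_\sm(x)\to0$ as $x\to-\infty$; moreover $W_\sm(x)\sim\me^{\mu^u_\sm x}r$ so the decay is genuine.

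Finally, analyticity: because the integrand in \eqref{eq:fixedpt} depends holomorphically on $\lambda$ and real-analytically on $\xi$ (the constant flow $\me^{x\mathbb{A}_\sm}$, the projections, the vector $r$, and $\mathbb{E}$ all do, the last since $\mathbb{A}$ is polynomial in $(\lambda,\xi)$), and the contraction constant is uniform on $\mathcal{N}$, the fixed point $Z=Z(x;\lambda,\xi)$ inherits joint continuity and, by the analytic/uniform-limit version of the contraction principle (or by differentiating \eqref{eq:fixedpt} and solving the resulting fixed-point equation for $\partial_\lambda Z$), is holomorphic in $\lambda$ and real-analytic in $\xi$ locally near $P_\circ$. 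I expect the only delicate point to be bookkeeping the weights so that both Duhamel integrals converge and the map is a genuine contraction uniformly in the parameters; the choice $r\in E^u(\mathbb{A}_\sm(\lambda,\xi))$ must itself be made to vary analytically, which is fine since that subspace is the range of the analytic rank-one projection $\Pi^u_\sm$. Everything else is the standard gap lemma / conjugation lemma machinery (cf.\ \cites{ZS_IUMJ99,B-GSZ_ZAA08}), so I would state it compactly and refer there for the fully detailed estimates.
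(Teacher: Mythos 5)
Your strategy is the same one the paper leans on: the paper's proof simply invokes the constant-coefficient spectral analysis together with the conjugation lemma of \cite{MZ_MAMS05} (deferring details to \cite{ZS_IUMJ99}), and what you propose is precisely the contraction-mapping construction that underlies that lemma, with analyticity obtained from uniform contraction with analytic dependence on $(\lambda,\xi)$. So the route is not genuinely different; the issue is that your central display is set up incorrectly, and as written it fails. In your fixed-point equation the projections are paired with the wrong integration directions: for $y\le x$ (the integral over $(-\infty,x]$) it is $\me^{(x-y)\mathbb{A}_\sm}\Pi^s_\sm$ that decays like $\me^{-\delta(x-y)}$, while $\me^{(x-y)\mathbb{A}_\sm}\Pi^u_\sm$ \emph{grows} like $\me^{\Re\mu^u_\sm (x-y)}$, so your claim that the unstable kernel decays ``going forward'' is backwards (and likewise for the stable kernel on $[x,0]$). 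Consequently the first integral need not converge at all: its integrand is of size $\me^{\Re\mu^u_\sm(x-y)}\me^{\theta y}$, integrable near $y=-\infty$ only if $\theta>\Re\mu^u_\sm$, which fails at perfectly ordinary base points (Burgers with $u_\sm=1$, $\xi_\circ=0$, $\lambda_\circ=10$ gives $\theta=1$ but $\Re\mu^u_\sm=(1+\sqrt{41})/2\approx 3.7$). There is also an internal mismatch: differentiating your displayed equation yields $Z'=\mathbb{A}_\sm Z+\mathbb{E}(x)(r+Z)$, which corresponds neither to the ansatz $W_\sm=\me^{x\mathbb{A}_\sm}(r+Z)$ (that gives $Z'=\me^{-x\mathbb{A}_\sm}\mathbb{E}\,\me^{x\mathbb{A}_\sm}(r+Z)$) nor to $W_\sm=\me^{x\mathbb{A}_\sm}r+Z$ (that gives forcing $\mathbb{E}(y)(\me^{y\mathbb{A}_\sm}r+Z)$), so the fixed point you would produce does not solve \eqref{eq:eode}.

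The repair is standard and short in this $2\times 2$, strictly hyperbolic setting: factor out the scalar exponential rather than the full flow. Writing $W_\sm=\me^{\mu^u_\sm x}V$, the equation becomes $V'=(\mathbb{A}_\sm-\mu^u_\sm I)V+\mathbb{E}(x)V$, where the shifted matrix has the simple eigenvalues $0$ and $\mu^s_\sm-\mu^u_\sm$ (negative real part), so $\me^{(x-y)(\mathbb{A}_\sm-\mu^u_\sm I)}$ is uniformly bounded for $x\ge y$ and one may take the single integral equation
\beq
V(x)=r+\int_{-\infty}^{x}\me^{(x-y)(\mathbb{A}_\sm-\mu^u_\sm I)}\,\mathbb{E}(y)\,V(y)\,\dif y\,,
\eeq
a contraction on $C_b((-\infty,x_0])$ once $x_0$ is so negative that $\int_{-\infty}^{x_0}\|\mathbb{E}\|\,\dif y$ is small (then extend to all $x$ by the flow); equivalently, keep your weighted-space formulation but assign $\Pi^s_\sm$ to the forward integral from $-\infty$ and $\Pi^u_\sm$ to the backward one, which is the standard exponential-dichotomy splitting. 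With that corrected setup, your final step is fine: $\mathbb{A}_\sm$, its simple eigenvalues $\mu^{u,s}_\sm$, the rank-one projections, $r$, and $\mathbb{E}$ all depend holomorphically on $\lambda$ and real-analytically on $\xi$ near $P_\circ$, the contraction is uniform there, and the fixed point inherits that regularity, which is exactly the content the paper obtains by citing the conjugation lemma.
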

\begin{proof}
The proof is a consequence of the preceding constant-coefficient analysis and the conjugation lemma of \cite{MZ_MAMS05}. In particular, the conjugation lemma asserts the existence of a well-behaved invertible change of coordinates which maps solutions $\mathcal{W}$ of the constant-coefficient problem to solutions $W$ of the variable-coefficient one. 
See \cite{ZS_IUMJ99} for details.
\end{proof}

\begin{definition}[Evans function]
The (local) \emph{Evans function} $D$ is defined by 
\beq\label{eq:evans}
D(\lambda,\xi)=\det(W_\sp(0;\lambda,\xi),W_\sm(0;\lambda,\xi))\,.
\eeq
\end{definition}
Our principal interest is in the polar Evans function defined for radial coordinate $\rho\in(0,\infty)$ by
\[
D(\rho;\lambda,\xi):=D(\rho\lambda,\rho\xi)\,.
\]
For ease of notation, we sometimes suppress the dependence on $\lambda$ and $\xi$ and simply denote the Evans function by $D(\rho)$. 

\begin{remark}
The key point is to understand what happens when $\rho\to 0^+$ and when $\Re\lambda=0$. 
In general, a delicate issue that arises on the boundary of $H$ are the \emph{glancing} and \emph{variable multiplicity} sets $\mathscr{G}$ and $\mathscr{V}$. However, when $n=1$, a direct computation shows that 
\[
\mathscr{G}=\emptyset\,,\quad \mathscr{V}=\emptyset. 
\]
Thus, we need not concern ourselves with these sets here. In the general case, the presence of glancing points, for example, prohibits the smooth extension of the stable and unstable subspaces of $\mathbb{A}_\spm$ to $\Re\lambda=0$, and these points---a measure zero set---must be excised from the boundary of $H$ in advance of the low-frequency analysis of $D$ that follows.
See \cite{B-GSZ_ZAA08} or \cite{Z_cime07} for more details.
\end{remark}

The next lemma paves the way for the fundamental low-frequency analysis of $D$ that is the cornerstone of the derivation of $\beta$. We omit the proof which is technical; details can be found in \cite{ZS_IUMJ99}.
\begin{lemma}[Low-frequency extension]\label{lem:lf}
Fix a base point $P_\circ=(\lambda_\circ,\xi_\circ)\in\{\Re\lambda\geq 0\}\times\RR$. The Evans function $D$ and its factors $W_\spm=(w_\spm,w_\spm')$ have a unique jointly analytic extension onto a neighborhood of $(\rho;\lambda,\xi)=(0;P_\circ)$. Moreover, the factors may be chosen so that
\[
w_\spm''-(a^1(\bar u)w_\spm)'=0\,,
\]
with $w_\sp(+\infty)=0$ and $w_\sm(-\infty)=0$.
\end{lemma}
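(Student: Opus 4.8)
The plan is to obtain the low-frequency extension by tracking how the constant-coefficient analysis behaves as $\rho \to 0^+$ and combining this with the conjugation lemma already invoked in the first lemma. First I would examine the eigenvalues $\mu_\spm(\lambda,\xi)$ of $\mathbb{A}_\spm$ at $\rho = 0$: from \eqref{eq:cpoly} at $(\lambda,\xi) = (0,0)$ the roots are $\mu = 0$ and $\mu = a^1(u_\spm)$. The nonzero root $a^1(u_\sp) < 0$ (resp. $a^1(u_\sm) > 0$, by the Lax conditions \eqref{eq:lax} with $s=0$) is the one governing the decay of $W_\sp$ at $+\infty$ (resp. $W_\sm$ at $-\infty$), and it stays bounded away from the imaginary axis for $(\rho;\lambda,\xi)$ near $(0;P_\circ)$. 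The delicate root is the one bifurcating from $\mu = 0$; the point is that on the relevant one of the two half-lines (rescaled time direction) this root moves into the correct (stable at $+\infty$, unstable at $-\infty$) half-plane as $\rho$ increases, so the one-dimensional stable subspace $E^s(\mathbb{A}_\sp)$ and unstable subspace $E^u(\mathbb{A}_\sm)$ extend analytically in $(\rho;\lambda,\xi)$ down to $\rho = 0$. This uses that $\mathscr{G} = \mathscr{V} = \emptyset$ in the scalar case, so no excision is needed and the subspaces are genuinely analytic, not merely continuous.

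Next I would invoke the conjugation lemma of \cite{MZ_MAMS05} exactly as in the proof of the first lemma, but now with the parameter triple $(\rho;\lambda,\xi)$ in place of $(\lambda,\xi)$: since the coefficient matrix $\mathbb{A}(x;\rho\lambda,\rho\xi)$ and its limits $\mathbb{A}_\spm$ depend analytically on $(\rho;\lambda,\xi)$ and converge exponentially fast to $\mathbb{A}_\spm$ as $x \to \pm\infty$ (the profile $\bar u$ approaches $u_\spm$ exponentially), the conjugator is analytic in all parameters near $(0;P_\circ)$. Pulling back the analytic bases of $E^s(\mathbb{A}_\sp)$ and $E^u(\mathbb{A}_\sm)$ through the conjugator yields solutions $W_\spm(x;\rho;\lambda,\xi)$ of \eqref{eq:eode}, analytic jointly in $(\rho;\lambda,\xi)$ on a neighborhood of $(0;P_\circ)$, decaying at $\pm\infty$; the determinant \eqref{eq:evans} then inherits joint analyticity, giving the first assertion. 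Uniqueness follows because the decaying solution in each direction is unique up to a scalar that is pinned down by the chosen normalization of the eigenvector bases.

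For the second assertion — that the factors may be chosen to solve $w_\spm'' - (a^1(\bar u) w_\spm)' = 0$ with the one-sided decay — I would specialize the constructed $W_\spm$ to $\rho = 0$. At $\rho = 0$ the system \eqref{eq:eode} is precisely the first-order form of $\mathscr{L}(0) w = 0$, i.e. $w'' - (a^1(\bar u) w)' = 0$, so $w_\spm(x) := w_\spm(x; 0; \lambda, \xi)$ automatically satisfies this ODE, and by construction $w_\sp(+\infty) = 0$, $w_\sm(-\infty) = 0$. It is worth noting consistency with \eqref{eq:ubar}: $\bar u'$ is one such decaying-at-$+\infty$ solution (it also decays at $-\infty$), and since the solution space of the second-order limiting equation that decays at $+\infty$ is one-dimensional (the other solution being constant, hence non-decaying), $w_\sp$ at $\rho = 0$ is a scalar multiple of $\bar u'$, and similarly for $w_\sm$; this is exactly the structure exploited in the derivation of $\beta$ that follows.

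The main obstacle is the analytic (as opposed to merely continuous or Lipschitz) extension of the invariant subspaces across $\rho = 0$ on the boundary $\Re \lambda = 0$: one must check that the spectral gap separating the "slow" eigenvalue near $0$ from the "fast" eigenvalue near $a^1(u_\spm)$ does not close, and that the slow eigenvalue, together with its eigenprojection, depends analytically on $(\rho;\lambda,\xi)$ near $\rho = 0$ rather than exhibiting a square-root branch point — a phenomenon that genuinely occurs at glancing points in the system case but is precluded here by the scalar structure. Once this gap-and-analyticity check is in hand, everything else is a direct application of the conjugation lemma and is, as the authors note, carried out in detail in \cite{ZS_IUMJ99}; accordingly I would keep that part brief and cite it.
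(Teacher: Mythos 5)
Since the paper omits any proof of Lemma \ref{lem:lf} beyond the citation of \cite{ZS_IUMJ99}, the relevant comparison is with the standard argument that citation supplies, and your sketch follows precisely that route: analytic continuation of the relevant hyperbolic subspaces of $\mathbb{A}_\spm$ down to $\rho=0$ (possible without excision because $\mathscr{G}=\mathscr{V}=\emptyset$ in the scalar case), the conjugation lemma of \cite{MZ_MAMS05} applied with $(\rho;\lambda,\xi)$ as parameters using the exponential convergence of $\bar u$ to $u_\spm$, and evaluation at $\rho=0$, where \eqref{eq:eode} reduces to the first-order form of $w''-(a^1(\bar u)w)'=0$. In substance this is the intended proof, and your concluding consistency check (that the decaying solutions at $\rho=0$ are multiples of $\bar u'$) is exactly the normalization used afterward in Proposition \ref{prop:d1}.

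One correction of detail: your description of the slow root is backwards. With $s=0$ and the Lax conditions \eqref{eq:lax}, writing $c=\rho\lambda+\mi\rho\xi a^2(u_\spm)+\rho^2\xi^2$, the root bifurcating from $\mu=0$ is $\mu\approx -c/a^1(u_\spm)$, so for $\Re\lambda>0$ it has \emph{positive} real part at $+\infty$ (where $a^1(u_\sp)<0$) and \emph{negative} real part at $-\infty$ (where $a^1(u_\sm)>0$) --- i.e.\ it moves into the unstable half-plane at $+\infty$ and the stable half-plane at $-\infty$, not as you state. If it moved as you claim, $E^s(\mathbb{A}_\sp)$ and $E^u(\mathbb{A}_\sm)$ would be two-dimensional, contradicting the dimension count already established and the definition \eqref{eq:evans}. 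Fortunately the slip is not load-bearing: as you correctly observe, $E^s(\mathbb{A}_\sp)$ and $E^u(\mathbb{A}_\sm)$ are spanned by the fast eigenvectors belonging to $\mu\approx a^1(u_\spm)$, which remain simple and bounded away from both the imaginary axis and the slow root near $(0;P_\circ)$, and this separation alone (with no glancing-type collision of roots) yields the analytic continuation of the subspaces. Finally, uniqueness of the extension is just uniqueness of analytic continuation from the open set where $D$ is already defined, rather than a statement about normalizing the decaying solutions.
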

With Lemma \ref{lem:lf} in hand, we are ready to outline the proof of the fundamental result of Zumbrun \& Serre \cite{ZS_IUMJ99}. Importantly, it links in a rigorous way the Lopatinski\u\i\ determinant $\Delta$ encoding the stability of the inviscid, ideal shock \eqref{eq:shock} and the Evans function associated with the corresponding viscous profile \eqref{eq:twa}. Roughly speaking, the result says that low-frequency/long-wave perturbations cannot distinguish between the inviscid, ideal shock $\check u$ and the viscous profile $\bar u$. 
\begin{proposition}[Zumbrun \& Serre, \cite{ZS_IUMJ99}]\label{prop:d1}
Fix $(\lambda,\xi)\in \overline{H}=\{\Re\lambda\geq0\}\times\RR$. Then, 
\beq
D(\lambda,\xi)=\Gamma\Delta(\lambda,\xi)+O(|\lambda,\xi|^2).
\eeq
Here, $\Gamma=\bar u'(0)$ is a constant measuring transversality of the connection in the traveling-wave problem\footnote{Transversality is automatic in the $n=1$ case we consider in this paper.}.
The quadratic error term is uniform for $(\lambda,\xi)$ in bounded subsets of $\overline{H}$. 
Equivalently, 
$D(0;\lambda,\xi)=0$ and 
\beq
\pd_\rho D(0;\lambda,\xi)=\Gamma\Delta(\lambda,\xi)\,.
\eeq
\end{proposition}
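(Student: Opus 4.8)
The plan is to compute the Taylor expansion of $D(\rho;\lambda,\xi)$ at $\rho=0$ using the low-frequency representation of the factors $W_\spm$ guaranteed by Lemma~\ref{lem:lf}. At $\rho=0$ the factors $w_\spm$ solve $w_\spm''-(a^1(\bar u)w_\spm)'=0$ with the indicated decay. Integrating this equation once, a decaying solution must satisfy $w_\spm'-a^1(\bar u)w_\spm=0$ (the constant of integration vanishes because both $w_\spm$ and $w_\spm'$ tend to $0$ and $a^1(\bar u)$ is bounded); hence $w_\spm$ is a scalar multiple of $\exp\!\big(\int_0^x a^1(\bar u)\big)$, which is exactly $\bar u'$ up to a constant by \eqref{eq:ubar} and \eqref{eq:tw1}. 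So at $\rho=0$ the two columns $W_\sp(0;0,0)$ and $W_\sm(0;0,0)$ are both proportional to $(\bar u'(0),\bar u''(0))^\tr$, the determinant vanishes, and $D(0;\lambda,\xi)=0$. This establishes the first equality.

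Next I would differentiate $D(\rho;\lambda,\xi)$ in $\rho$ at $\rho=0$. Write $D(\rho)=\det(W_\sp(0;\rho\lambda,\rho\xi),W_\sm(0;\rho\lambda,\rho\xi))$; by the product/Jacobi rule for determinants of $2\times 2$ matrices,
\[
\pd_\rho D(0)=\det\big(\pd_\rho W_\sp(0),W_\sm(0)\big)+\det\big(W_\sp(0),\pd_\rho W_\sm(0)\big),
\]
all evaluated at $\rho=0$, i.e.\ at $(\lambda,\xi)=(0,0)$. Since $W_\sp(0;0,0)$ and $W_\sm(0;0,0)$ are parallel, each determinant picks out only the component of $\pd_\rho W_\spm$ transverse to $\bar u'$. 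The $\rho$-derivative of $W_\spm$ satisfies the variational (inhomogeneous) equation obtained by differentiating \eqref{eq:eode} in $\rho$: its right-hand side involves $\pd_\rho\mathbb{A}$, whose only nonzero entry at $\rho=0$ is $(\pd_\rho\mathbb{A})_{21}=\lambda+\mi\xi a^2(\bar u)$ (the $\xi^2$ and the $\mathbb{A}_{22}$-terms being higher order or $\rho$-independent). One then integrates this variational equation against the decaying solution $\bar u'$ of the adjoint/homogeneous problem — a solvability/variation-of-parameters computation — and the transverse component of $\pd_\rho W_\spm(0)$ emerges as an integral $\int$ of $(\lambda + \mi\xi a^2(\bar u))\,\bar u'$ against an appropriate weight over a half-line. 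Collecting the two half-line integrals and using $\int_{-\infty}^{\infty}\bar u' = \jump{u}$ and $\int_{-\infty}^{\infty} a^2(\bar u)\bar u' = \jump{f^2(u)}$ (a telescoping of $f^2$), the bracket assembles into exactly $\lambda\jump{u}+\mi\xi\jump{f^2(u)}=\Delta(\lambda,\xi)$, with the overall prefactor collapsing to $\Gamma=\bar u'(0)$ after the Wronskian normalizations are tracked.

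The main obstacle is bookkeeping in the middle step: correctly identifying the weight in the variation-of-parameters integral and confirming that the boundary terms at $x=0$ from the two half-lines cancel so that only the clean jump integrals $\jump{u}$ and $\jump{f^2(u)}$ survive, and that no contribution from $\pd_\rho$ of the $\mathbb{A}_{22}$ or $\xi^2$ entries leaks in at this order. This is where the homogeneity of $\Delta$ and the precise $O(|\lambda,\xi|^2)$ uniformity must be checked; the $\xi^2$ term is genuinely quadratic so it is harmless, but one must be careful that the cross term does not contribute. Once the transverse components are in hand, the uniformity of the error on bounded subsets of $\overline H$ follows from the joint analyticity asserted in Lemma~\ref{lem:lf} together with a compactness argument, so the quadratic remainder is immediate. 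For the full details of the solvability computation I would refer to \cite{ZS_IUMJ99}, since the scalar case here is a direct specialization.
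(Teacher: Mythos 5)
Your proposal is correct and follows the same overall strategy as the paper: normalize the factors at $\rho=0$ so that $w_\spm$ coincide with $\bar u'$ (giving $D(0)=0$), expand by the Leibniz rule, compute $\pd_\rho W_\spm$ from the $\rho$-differentiated eigenvalue equation, integrate over the two half-lines, and telescope to the jump terms $\jump{u}$ and $\jump{f^2(u)}$. The one place you diverge—and which you flag as the main obstacle, deferring to \cite{ZS_IUMJ99}—is the middle step, which you cast as an adjoint/variation-of-parameters computation with an unspecified weight. The paper dispatches this more simply: differentiating the polar form \eqref{eq:rho} in $\rho$ at $\rho=0$ yields an equation whose two sides are both perfect derivatives, $\big(y_\spm'-a^1(\bar u)y_\spm\big)'=\big(\lambda\bar u+\mi\xi f^2(\bar u)\big)'$ as in \eqref{eq:rho-perfect}, so one integrates exactly from $\pm\infty$ to $x$, subtracts to get $y'-a^1(\bar u)y=\lambda\jump{u}+\mi\xi\jump{f^2(u)}=\Delta$ for $y=y_\sm-y_\sp$ (\eqref{eq:yzs}), and then a single row operation using $\bar u''=a^1(\bar u)\bar u'$ (\eqref{eq:baru-row}) collapses the determinant at $x=0$ to $\bar u'(0)\Delta=\Gamma\Delta$; no adjoint solution or weight is needed, and the $\rho^2\xi^2$ term drops out automatically. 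Your worries largely dissolve on inspection: the weight in your formulation is in fact the constant $\bar u'(0)$, because $\bar u'(x)=\bar u'(0)\exp\big(\int_0^x a^1(\bar u)\big)$ cancels the integrating factor, and the $\bar u(0)$-dependent boundary contributions from the two half-lines cancel upon addition, leaving exactly the jumps. One small point to make explicit: deriving that $w_\spm$ are merely \emph{proportional} to $\bar u'$ suffices for $D(0)=0$, but to obtain the constant $\Gamma=\bar u'(0)$ on the nose you must impose the normalization $w_\spm(0;\lambda,\xi)=\bar u'$ permitted by Lemma~\ref{lem:lf}, which is what the paper does and what your phrase about ``tracking Wronskian normalizations'' needs to mean concretely.
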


\begin{proof}
We outline the principal calculation. First, recalling the discussion surrounding equation \eqref{eq:ubar}, we make the standard normalization at $\rho=0$ that $w_\spm(0;\lambda,\xi)=\bar u'$. From this, the assertion that $D(0)=0$ is immediate. Second, observe that 
\beq\label{eq:leibniz}
\pd_\rho D (0) =  \det(\pd_\rho W_\sp, W_\sm)+\det(W_\sp ,\pd_\rho W_\sm)\,.
\eeq
But then, using again the normalization at $\rho = 0$, we see that the expression for the derivative of the Evans function in \eqref{eq:leibniz} may be rewritten as 
\beq
\pd_\rho D(0)=\det( \bar U', Y_\sm-Y_\sp)\,,
\eeq
where $Y_\spm=(y_\spm,y_\spm')^\tr:=\pd_\rho W_\spm$ and $\bar U'=(\bar u',\bar u'')^\tr$. 
In polar coordinates the eigenvalue equation takes the form 
\beq\label{eq:rho}
\rho\lam w+(a^1(\bar u) w)'+\mi\rho\xi a^2(\bar u)w= w''-\rho^2\xi^2w\,.
\eeq
Thus, differentiating with respect to $\rho$ and setting $\rho=0$, we find from \eqref{eq:rho} that 
\beq\label{eq:rho-zero}
\lam\bar u'+(a^1(\bar u)y_\spm)'+\mi\xi a^2(\bar u)\bar u'=y_\spm''\,.
\eeq
We rewrite \eqref{eq:rho-zero} as 
\beq\label{eq:rho-perfect}
\big(y_\spm'-(a^1(\bar u)y_\spm\big)'=\big(\lam\bar u +\mi\xi f^2(\bar u)\big)'\,,
\eeq
to express both sides of the equation as perfect derivatives. We integrate the $y_\sp$ equation in  \eqref{eq:rho-perfect} from $+\infty$ to $x$ (Recall, $y_\sp(+\infty)=0, y_\sp'(+\infty)=0$):
\beq\label{eq:yp}
y_\sp'-a^1(\bar u)y_\sp=\lam(\bar u-u_\sp)+\mi\xi (f^2(\bar u)-f^2(u_\sp))\,.
\eeq
Similarly, we integrate the $y_\sm$ equation from $-\infty$ to $x$: 
\beq\label{eq:ym}
y_\sm'-a^1(\bar u)y_\sm=\lam(\bar u-u_\sm)+\mi\xi(f^2(\bar u)-f^2(u_\sm))\,.
\eeq
Combining the results of \eqref{eq:yp} and \eqref{eq:ym}, we find 
that the components of $Y=Y_\sm-Y_\sp$ satisfy the equation 
\beq\label{eq:yzs}
y'-a^1(\bar u )y=\lambda\jump{u}+\mi\xi\jump{f^2(u)}\,.
\eeq
But, as we noted above in \eqref{eq:ubar}, $\bar u'$ satisfies $\mathscr{L}(0)\bar u'=0$, or 
\beq\label{eq:baru-row}
\bar u''-a^1(\bar u)\bar u'=0\,.
\eeq
Equations \eqref{eq:yzs} and \eqref{eq:baru-row} show that the row operation of adding $a^1(\bar u)$ times the first row to the second row simplifies the Evans determinant as follows
\beq
\pd_\rho D(0)=\det\begin{pmatrix} \bar u' & y \\ \bar u'' & y' \end{pmatrix}
=\det\begin{pmatrix} \bar u' & * \\ 0 & \Delta\end{pmatrix}=\Gamma\Delta\,.
\eeq
This completes the outline of the proof. 
\end{proof}

\subsection{The building blocks of $\beta$}\label{ssec:beta}

In this section we outline the derivation of the final ingredient of $\beta$; these calculations provide the framework for our computational approach. They depend on the calculations in \S\ref{ssec:evanslf}, particularly those in the proof of Proposition \ref{prop:d1}. The basic assumption is that $(\lambda_\circ,\xi_\circ)$ is a zero of $\Delta$ with $\gamma_\circ=\Re\lambda_\circ=0$. In the current setting, this means that $\lambda_\circ$ and $\xi_\circ$ are related via
\beq
\tau_\circ=\Im\lambda_\circ=-\xi_\circ\frac{\jump{f^2(u)}}{\jump{u}}\,.
\eeq

\begin{proposition}[Benzoni-Gavage, Serre, \& Zumbrun, \cite{B-GSZ_ZAA08}]\label{prop:d2}
If $(\lam_\circ,\xi_\circ)$ is a neutral zero of $\Delta$, then 
\beq
\pd_\rho^2D(0)=\Gamma\int_{-\infty}^\infty 2\big(\mi\tau_\circ+\mi\xi_\circ a^2(\bar u(\eta))\big)y(\eta)+2\xi_\circ^2\bar u'(\eta)\,\dif\eta\,.
\eeq
\end{proposition}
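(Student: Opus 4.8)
The plan is to repeat, one order higher, the computation carried out in the proof of Proposition~\ref{prop:d1}. Starting from the polar eigenvalue equation \eqref{eq:rho}, I would differentiate twice with respect to $\rho$ and set $\rho=0$. Writing $Z_\spm=(z_\spm,z_\spm')^\tr:=\tfrac12\pd_\rho^2 W_\spm$, the first-order terms $\lam\pd_\rho w_\spm$ and $\mi\xi a^2(\bar u)\pd_\rho w_\spm$ in \eqref{eq:rho} each contribute a term $2(\mi\tau_\circ + \mi\xi_\circ a^2(\bar u))y_\spm$ after evaluating at the neutral point (using $\lam_\circ = \mi\tau_\circ$), while the term $-\rho^2\xi^2 w$ contributes $-2\xi_\circ^2\bar u'$, since $w_\spm(0;\cdot)=\bar u'$ by the normalization from the previous proof. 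The left side $(a^1(\bar u)w)'' - w'''$ becomes $\big(z_\spm' - a^1(\bar u)z_\spm\big)''$ after multiplying by $\tfrac12$. Collecting terms, I expect
\beq
\big(z_\spm' - a^1(\bar u)z_\spm\big)' = -\int_{\pm\infty}^x \Big(2(\mi\tau_\circ+\mi\xi_\circ a^2(\bar u))y_\spm + 2\xi_\circ^2\bar u'\Big)\,\dif\eta' + (\text{lower-order perfect derivative})\,,
\eeq
so that, after one integration against the appropriate endpoint condition $z_\sp(+\infty)=0$, $z_\sm(-\infty)=0$, the quantity $z := z_\sm - z_\sp$ satisfies a first-order ODE of the form $z' - a^1(\bar u)z = (\text{explicit integral of }y)$.

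Next I would expand $D(\rho) = \det(W_\sp(0),W_\sm(0))$ to second order in $\rho$ via Leibniz. Since $W_\sp(0;0)=W_\sm(0;0)=\bar U'$, the $\rho^0$ and the bracketed $\rho^1$ contributions that are proportional to $\det(\bar U',\bar U')$ vanish, and one is left with
\beq
\tfrac12\pd_\rho^2 D(0) = \det(Z_\sp - Z_\sm,\ \bar U') + \det(\pd_\rho W_\sp,\ \pd_\rho W_\sm)\,.
\eeq
At a \emph{zero} of $\Delta$ the second determinant drops out: \eqref{eq:yzs} with $\lam=\lam_\circ$, $\xi=\xi_\circ$ gives $y' - a^1(\bar u)y = \Delta(\lam_\circ,\xi_\circ) = 0$, so together with \eqref{eq:baru-row} the vector $Y$ is a scalar multiple of $\bar U'$ (both solve the same linear first-order system), forcing $\det(\pd_\rho W_\sp,\pd_\rho W_\sm) = \det(Y_\sp,Y_\sm)$ to be a determinant of two parallel vectors modulo $\bar U'$; one has to check this actually vanishes, which it does because $Y_\sm - Y_\sp \parallel \bar U'$ implies $Y_\sp$ and $Y_\sm$ differ by a multiple of $\bar U'$. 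So only the $Z$-term survives, and the same row operation used before---add $a^1(\bar u)$ times row one to row two---reduces $\det(Z_\sp - Z_\sm,\bar U')$ to $\bar u'(0)$ times the lower-right entry, which by construction is $\mp\big(z_\sp' - a^1(\bar u)z_\sp\big) \pm \cdots$ evaluated appropriately, i.e.\ $\Gamma$ times the stated integral.

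The main obstacle I anticipate is bookkeeping at the level of the first-order (in $\rho$) cross terms: when expanding $D$ to second order, the term $\det(\pd_\rho W_\sp,\pd_\rho W_\sm)$ must be shown to vanish \emph{exactly at a neutral zero}, and this is precisely where the hypothesis $\Delta(\lam_\circ,\xi_\circ)=0$ is used---away from a zero it would contribute. A secondary subtlety is that the $\rho^1$-order solution $y_\spm$ is only determined by \eqref{eq:yp}--\eqref{eq:ym} up to adding a multiple of $\bar u'$ (the homogeneous solution), so I must verify that the final integral expression is independent of that choice; the ambiguity adds $\int (2(\mi\tau_\circ + \mi\xi_\circ a^2(\bar u))c\,\bar u' )$, and one checks this is cancelled by a corresponding shift hidden in the normalization of $Z_\spm$, or else fixes the gauge by demanding $y_\spm$ be the particular solution with the stated decay, which is what \eqref{eq:yp}--\eqref{eq:ym} already provide. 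Apart from these two points the argument is a direct, if slightly lengthy, iteration of the Proposition~\ref{prop:d1} computation, and the ``perfect derivative'' structure of \eqref{eq:rho-perfect} persists at the next order because $\mi\xi_\circ a^2(\bar u)\bar u' = \mi\xi_\circ (f^2(\bar u))'$ is again an exact derivative.
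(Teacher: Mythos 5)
Your overall strategy---iterate the Proposition~\ref{prop:d1} computation one order higher, expand $D$ by Leibniz, and identify the surviving determinant by a row operation---is the same as the paper's, but there is a genuine gap at the step you yourself flag as the main obstacle. You claim that $\det(\pd_\rho W_\sp,\pd_\rho W_\sm)=\det(Y_\sp,Y_\sm)$ vanishes at a neutral zero because $Y_\sm-Y_\sp$ is parallel to $\bar U'$. That inference is false: $Y_\sm=Y_\sp+c_1\bar U'$ gives $\det(Y_\sp,Y_\sm)=c_1\det(Y_\sp,\bar U')$, and since $y_\sp'-a^1(\bar u)y_\sp=\lam_\circ(\bar u-u_\sp)+\mi\xi_\circ(f^2(\bar u)-f^2(u_\sp))\neq 0$ while $\bar u''-a^1(\bar u)\bar u'=0$, the vector $Y_\sp$ is generically \emph{not} parallel to $\bar U'$, and $c_1$ need not vanish. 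So in the original basis the cross term survives. The paper removes it by a renormalization: since $y_\sm-y_\sp=c_1\bar u'$, one replaces $w_\sm$ by $\tilde w_\sm=(1+\rho c_1)w_\sm$, which multiplies the Evans function by $(1+\rho c_1)$---harmless at second order precisely because $D(0)=\pd_\rho D(0)=\Gamma\Delta(\lam_\circ,\xi_\circ)=0$ at a neutral zero---and arranges $\tilde y_\sp=\tilde y_\sm$ \emph{exactly}, so the cross determinant is a determinant with two equal columns and vanishes identically.

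The same equality $\tilde y_\sp=\tilde y_\sm$ is needed a second time, and its absence breaks another step of your sketch: subtracting the two equations \eqref{eq:zpm} only yields $\tilde z'-a^1(\bar u)\tilde z=\mathcal{I}$ with $\mathcal{I}$ \emph{constant} because the right-hand sides coincide. In the unmodified basis one gets instead $(z'-a^1(\bar u)z)'=2(\lam_\circ+\mi\xi_\circ a^2(\bar u))c_1\bar u'\neq 0$, so your ``first-order ODE of the form $z'-a^1(\bar u)z=(\text{explicit integral of }y)$'' does not reduce, after the row operation, to $\Gamma$ times a single number; the non-constant remainder would have to be shown to cancel against the non-vanishing cross term, which your proposal does not do. Finally, the identification of the constant is not a single integration ``against the appropriate endpoint condition'': because $\tilde z_\sp$ decays only at $+\infty$ and $\tilde z_\sm$ only at $-\infty$, the paper evaluates $\mathcal{I}=\mathscr{M}\tilde z_\sm(0)-\mathscr{M}\tilde z_\sp(0)$ by applying the fundamental theorem of calculus separately on $(-\infty,0]$ and $[0,+\infty)$ and then gluing the two half-line integrals, using once more that $\mathscr{L}(0)\tilde z_\sp=\mathscr{L}(0)\tilde z_\sm$. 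Your side remark about gauge-independence is essentially correct (shifting $\tilde y$ by $c\,\bar u'$ changes the integral by $2c\,\Delta(\lam_\circ,\xi_\circ)=0$), but it does not repair the missing basis-change argument, which is the crux of the paper's proof.
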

\begin{proof}
First, we note that the assumption $\Delta=0$ implies that (see \eqref{eq:yzs})
\[
y'-a^1(\bar u)y=0\,.
\]
But, this implies that,
for some constant $c_1$,
\[
y_\sm-y_\sp=c_1\bar u'\,.
\]
Therefore, we define 
\beq
\tilde w_\sm=w_\sm+\rho c_1w_\sm\,,\quad \tilde w_\sp=w_\sp\,,
\eeq
and we note that at $\rho=0$, we have
\[
\tilde w_\sp=\tilde w_\sm=\bar u'\,,\quad
\tilde y_\sm=y_\sm+c_1\bar u'\,,\quad
\tilde y_\sp=y_\sp\,.
\]
We shall use this new basis for our computations. That is, we now compute with the alternatively defined Evans function
\beq
D(\lambda,\xi)=\det(\tilde W_\sp(0;\lam,\xi), \tilde W_\sm(0;\lam,\xi)).
\eeq 
The advantage is that, with this new basis, 
\beq\label{eq:newbasis}
\tilde y_\sm=\tilde y_\sp\,.
\eeq
Then, as in Proposition \ref{prop:d1}, $D(0)=0$ and (now also) $\pd_\rho D(0)=0$.
We now examine the second derivative of $D$ with respect to $\rho$. We observe that, by the Leibniz rule, the second derivative of the Evans determinant may be expanded as
\beq\label{eq:leib2}
\pd_\rho^2 D = \det (\pd_\rho^2 \tilde W_\sp,\tilde W_\sm)+2\det(\pd_\rho \tilde W_\sp,\pd_\rho \tilde W_\sm)+\det(\tilde W_\sp,\pd_\rho^2\tilde W_\sm)\,.
\eeq
But, by \eqref{eq:newbasis}, it follows immediately that 
\begin{align*}
\det (\pd_\rho \tilde W_\sp,\pd_\rho \tilde W_\sm) & = \det\begin{pmatrix} \tilde y_\sp & \tilde y_\sm \\ \tilde y_\sp' & \tilde y_\sm'\end{pmatrix} =0\,.
\end{align*}
Thus, we may proceed in a fashion similarly as in Proposition \ref{prop:d1}. The two remaining determinants in \eqref{eq:leib2} may be combined so that
\beq
\pd_\rho^2 D(0)=\det (\bar U', \tilde Z_\sm-\tilde Z_\sp)
\eeq
where
\beq
\tilde Z_\spm=(\tilde z_\spm,\tilde z_\spm')^\tr:=\frac{\pd^2\tilde W_\spm}{\pd\rho^2}\,.
\eeq
We differentiate \eqref{eq:rho} twice with respect to $\rho$ and we set $\rho=0$; we find that $\tilde z_\spm$ satisfy the equation
\beq
2\lam_\circ \tilde y_\spm+(a^1(\bar u)\tilde z_\spm)'+2\mi\xi_\circ a^2(\bar u)\tilde y_\spm=\tilde z_\spm''-2\xi_\circ^2\bar u'\,,
\eeq
or, rearranging terms, 
\beq\label{eq:zpm}
\mathscr{L}(0)\tilde z_\spm=\big(\tilde z_\spm'-a^1(\bar u)\tilde z_\spm\big)'=2(\lambda_\circ+\mi\xi_\circ a^2(\bar u))\tilde y_\spm+2\xi_\circ^2\bar u'\,.
\eeq
But, since $\tilde y_\sp=\tilde y_\sm$, we subtract and integrate in \eqref{eq:zpm} to find that $\tilde z=\tilde z_\sm-\tilde z_\sp$ satisfies
\beq\label{eq:constant}
\tilde z'-a^1(\bar u)\tilde z=\mathcal{I}\,,
\eeq
where $\mathcal{I}$ is a constant. 
Next, by a row operation as in the proof of Proposition \ref{prop:d1}, we find that 
\beq
\pd_\rho^2D(0)=\det (\bar U', \tilde Z_\sm-\tilde Z_\sp)=\det\begin{pmatrix} \bar u' & * \\ 0 & \mathcal{I}\end{pmatrix}=\Gamma \mathcal{I}\,.
\eeq
It remains to identify the constant $\mathcal{I}$. We define $\mathscr{M}$ by $\mathscr{M}'=\mathscr{L}(0)$. That is, the action of $\mathscr{M}$ on a function $g$ is given by 
\[
\mathscr{M}g(x)=g'(x)-a^1(\bar u(x))g(x)\,.
\]
Thus, we may rewrite \eqref{eq:constant} as 
\beq\label{eq:constant2}
\mathcal{I}=\mathscr{M}\tilde z_\sm(0)-\mathscr{M}\tilde z_\sp(0)\,.
\eeq
But, by the fundamental theorem of calculus, 
\begin{align}
\int_{-\infty}^0\mathscr{L}(0)\tilde z_\sm(x)\,\dif x&=\mathscr{M}\tilde z_\sm(0)-\mathscr{M}\tilde z_\sm(-\infty)\,,\label{eq:ftc1}\\
\intertext{and}
\int_{0}^{+\infty}\mathscr{L}(0)\tilde z_\sp(x)\,\dif x&=\mathscr{M}\tilde z_\sp(+\infty)-\mathscr{M}\tilde z_\sp(0)\,.
\label{eq:ftc2}
\end{align}
Thus, using \eqref{eq:ftc1} and \eqref{eq:ftc2}, we are finally in a position to identify the constant $\mathcal{I}$. We see, starting with the expression in \eqref{eq:constant2}, that
\begin{align}
\mathcal{I} & = \mathscr{M}\tilde z_\sm(0)-\mathscr{M}\tilde z_\sp(0) \label{eq:a}\\
	& = \int_{-\infty}^0\mathscr{L}(0)\tilde z_\sm(x)\,\dif x + \int_0^{+\infty}\mathscr{L}(0)\tilde z_\sp(x)\,\dif x+\mathscr{M}\tilde z_\sm(-\infty)-\mathscr{M}\tilde z_\sp(+\infty) \label{eq:b}\\
	& = \int_{-\infty}^{+\infty}\mathscr{L}(0)\tilde z_\spm(x)\,\dif x+\mathscr{M}\tilde z_\sm(-\infty)-\mathscr{M}\tilde z_\sp(+\infty) \label{eq:c} \\
	& =\int_{-\infty}^{+\infty}2(\lambda_\circ+\mi\xi_\circ a^2(\bar u(x)))\tilde y(x)+2\xi_\circ^2\bar u'(x)\,\dif x\,.\label{eq:d}
\end{align}
We note that the equality in \eqref{eq:c} follows from the fact that $\mathscr{L}(0)\tilde z_\sp=\mathscr{L}(0)\tilde z_\sm$ (see \eqref{eq:zpm}), and the boundary terms vanish because $\tilde z_\spm$ tends to zero exponentially as $x\to\pm\infty$.
In conclusion, we have shown that  
\beq\label{eq:i}
\mathcal{I}=\int_{-\infty}^{+\infty}2(\lambda_\circ+\mi\xi_\circ a^2(\bar u(x)))\tilde y(x)+2\xi_\circ^2\bar u'(x)\,\dif x\,.
\eeq
\end{proof}
Propositions \ref{prop:d1} and \ref{prop:d2} provide the fundamental ingredients for the derivation and calculation of $\beta$.  The derivation is based on the relationship between the zero sets of $\Delta$ and $D$ expressed in Proposition \ref{prop:d1}. The main analytic tool is the implicit function theorem. We omit the details, which may be found in 
\cite{ZS_IUMJ99}, and simply note that $\beta$ is given by 
\beq\label{eq:beta}
\beta=\left(\frac{\pd^2 D}{\pd\rho^2}\right)\left(\frac{\pd^2 D}{\pd\rho\pd\lambda}\right)^{-1}\,.
\eeq
But, Proposition \ref{prop:d1} implies that 
\[
\frac{\pd^2D}{\pd\rho\pd\lam}=\Gamma\frac{\pd\Delta}{\pd\lam}\,,
\]
and---given the explicit form of $\Delta$ in \eqref{eq:lop}---we see immediately that 
\[
\frac{\pd \Delta}{\pd\lambda}=\jump{u}\,.
\]
Thus, from Proposition \ref{prop:d2}, we find that
\beq\label{eq:beta1}
\beta   =\frac{\Gamma\mathcal{I}}{\Gamma\Delta_\lambda}\,.
\eeq
From \eqref{eq:beta1} we see immediately that the computation of $\beta$ requires one to compute $\mathcal{I}$. Evidently, from \eqref{eq:i}, to compute $\mathcal{I}$ one needs to know both the profile $\bar u$ and the function $\tilde y$ solving the differential equation (see \eqref{eq:rho-perfect})
\beq\label{eq:y}
\big(\tilde y'-a^1(\bar u) \tilde y\big)'=\big(\lambda_\circ+\mi\xi_\circ a^2(\bar u(x))\big)\bar u'\,.
\eeq
In the next section we propose two methods for the practical approximation of $\tilde y$.

\begin{note}
 Briefly, the sign of $\Re\beta$ detects whether or not the zero level set of $D$ curls into the unstable half plane. Heuristically, it has a physical interpretation as an ``effective viscosity'' coefficient for transversely propagating deformations of the front. A detailed discussion of this point can be found in \cite{ZS_IUMJ99}. 
\end{note}

\section{Computing $\tilde y$}\label{sec:compute}

As pointed out in \cite{B-GSZ_ZAA08}, the principal task in finding $\beta$ is to compute $\tilde y$; the other elements required to compute $\beta$ are the building blocks of $\Delta$---in general these are the eigenvalues and eigenvectors\footnote{In the general case, one also uses these eigenvalues and eigenvectors to compute a boundary term $\mathcal{B}$; this term does not appear in our setting.} of $\mathcal{A}_\spm(\lambda,\tilde\xi)$---and the profile $\bar u$.
Benzoni-Gavage, Serre, \& Zumbrun \cite{B-GSZ_ZAA08} have proposed a two-step method for finding $\beta$. That is, the equation for $\tilde y$ is a linear equation whose coefficients depend on the profile $\bar u$. Thus, they propose to first solve the profile equation \eqref{eq:tw2}. For gas dynamics, techniques for doing this are well known; see, e.g., \cite{HLZ_ARMA09}. Then, they describe how to transform the equation for $\tilde y$ into one that fits into a standard numerical framework for approximating the solution of a two-point boundary value problem. Their method has never, to our knowledge, been implemented. Below, we propose two alternative approaches that, at least in the present context ($n=1$),  work well. We are currently investigating all three approaches in the case of gas dynamics for which $n>1$.

\subsection{Integrating factor}\label{ssec:if}

This method exploits the linear structure of \eqref{eq:y}. The equation for $\tilde y$ is  
\beq\label{eq:ytif}
\big(\tilde y'-a^1(\bar u) \tilde y\big)'=(\mi\tau_\circ+\mi\xi_\circ a^2(\bar u(x)))\bar u'\,.
\eeq
We integrate both sides of \eqref{eq:ytif}, and we obtain
\beq
\tilde y'-a^1(\bar u) \tilde y=\mi\tau_\circ \big(\bar u-u_\sm\big)+\mi\xi_\circ\big(f^2(\bar u)-f^2(u_\sm)\big)\,.
\eeq
We write $\tilde y$ in terms of its real and imaginary parts as 
\[
 \tilde y= w+\mi v\,.
\]
Then, the relevant system of equations becomes
\begin{subequations}\label{eq:gen-vw}
\begin{align}
 w' & = a^1(\bar u) w\,, \\
 v' & = a^1(\bar u) v +\tau_\circ(\bar u-u_\sm)+\xi_\circ(f^2(\bar u)-f^2(u_\sm))\,. 
\end{align}
\end{subequations}
We suppose that $\bar u$ is known, and we write 
\[
\mathcal{F}(x)=\tau_\circ(\bar u(x)-u_\sm)+\xi_\circ(f^2(\bar u(x))-f^2(u_\sm))\,.
\]
Now, suppose that $M$ is defined by
\beq\label{eq:int-factor}
M(x):=\exp\left(-\int_0^x a^1(\bar u(z))\,\dif z\right)\,.
\eeq
Then, a simple calculation shows that $M'(x)=-a^1(\bar u(x))M(x)$, whence the equation for $w$ can be rewritten as a perfect derivative $(M w)'=0$. Integrating from $0$ to $x$ and using $w(0)=A$, we find immediately that 
\beq\label{eq:w}
w(x)=A\exp\left(\int_0^xa^1(\bar u(z))\,\dif z\right)\,.
\eeq
Thus, $w$ is given explicitly in terms of the profile $\bar u$. Similarly, we apply the integrating factor $M$ to the equation for $v$. We write $v(0)=B$, and we see that 
\begin{align}
v(x)&=\me^{\int_0^x a^1(\bar u(z))\,\dif z}\left\{B+\int_0^x\me^{-\int_0^z a^1(\bar u(\eta))\,\dif\eta}\mathcal{F}(z)\,\dif z\right\} \nonumber \\
	&= \me^{\int_0^x a^1(\bar u(z))\,\dif z}\left\{B+\int_0^x\me^{-\int_0^z a^1(\bar u(\eta))\,\dif\eta}\Big(\tau_\circ(\bar u(z)-u_\sm)+\xi_\circ(f^2(\bar u(z))-f^2(u_\sm))\,\dif z\Big)\right\}\,. \label{eq:v}
\end{align}
Equations \eqref{eq:w} and \eqref{eq:v} show that we may write $\tilde y=w+\mi v$ explicitly in terms of the profile $\bar u$. Thus, rather than solve a differential equation for $\tilde y$, in this case we may simply approximate $\bar u$ (as described above, a well understood problem) and then use that approximation to compute the integrals in \eqref{eq:w} and \eqref{eq:v}. Indeed, as Example \ref{ex:exact} below shows, in some cases it is possible to compute these integrals exactly. 

\begin{remark}\label{rem:ab}
From \cite{B-GSZ_ZAA08}, we note that in the computation of $\beta$, the two apparently free parameters $A$ and $B$ above should be chosen so that $\tilde y$ satisfies an orthogonality condition at the origin. In the current setting, this reduces to the requirement that   
\[
A=0\,,\quad B=0\,.
\]
\end{remark}

\begin{example}[Exact solution]\label{ex:exact}
In the case of the Burgers flux $f^1(u)=u^2/2$, we have seen that if $u_\sp=-1$, $u_\sm=1$, then 
\(
s=0
\).
We have also seen that in this case the profile $\bar u$ is given by
\[
\bar u(x)=-\tanh(x/2)\,.
\]
Suppose that $f^2(u)=u^2$. We take $\tau_\circ=0$ and $\xi_\circ=1$ so that $\Delta(\mi\tau_\circ,\xi_\circ)=0$. Then, system of equations \eqref{eq:gen-vw} reduces to 
\beq\label{eq:exactvw}
w' = \bar u w\,, \quad 
v'  = \bar u v+\bar u^2-1\,.
\eeq
That is, we need to solve 
\[
w' = -\tanh(x/2) w\,, \quad 
v'  = -\tanh(x/2)v+\tanh^2(x/2)-1\,.
\]
By direct computation, with $A=0$ and $B=0$, we find immediately from \eqref{eq:w}, \eqref{eq:v} that 
\beq
w(x)=0\,,\quad v(x)=-x\sech^2(x/2)\,.
\eeq
This solution is plotted in Figure \ref{fig:compare}.
\end{example}

\begin{remark}
The formulae in \cite{B-GSZ_ZAA08} are derived in the case of strictly parabolic viscosity. That is, the matrices $B^{jk}$ in \eqref{eq:general_visc} are assumed to satisfy
\beq\label{eq:parabolicity}
\Re\sigma\left(\sum_{j,k=1}^d\xi_j\xi_k B^{jk}\big(\bar u(\cdot)\big)\right)>0 \quad\text{for all $\vec{\xi}\in\RR^d\setminus\{0\}$}\,.
\eeq
However, the important physical case of gas dynamics features only a partially parabolic or ``real'' viscosity. For example, consider the equations of isentropic gas dynamics
\begin{subequations}\label{eq:gas}
\begin{align}
\pd_t\rho +\nabla\cdot (\rho\vec{u}) &= 0\,, \label{eq:mass}\\
\pd_t(\rho\vec{u})  +\nabla\cdot(\rho \vec{u}\otimes\vec{u})+\nabla p &=\mu\Delta\vec{u}+(\mu+\eta)\grad\dv\vec{u} \,,\label{eq:momentum}
\end{align}
\end{subequations}
where $\mu$ and $\eta$ are the first (``dynamic'') and second viscosity coefficients (assumed here to be constant). The lack of second-order terms on the right-hand side of the conservation of mass equation \eqref{eq:mass} prevents the system \eqref{eq:gas} from satisfying \eqref{eq:parabolicity}; nonetheless, it is clear that the results of \cite{B-GSZ_ZAA08} extend in a natural way to systems with ``real'' or partially parabolic viscosity, such as the equations of gas dynamics. It is a rather tedious exercise to derive the equation for $\tilde y$ in that setting. 
Remarkably, our preliminary calculations for the system \eqref{eq:gas}  show that the equation for $\tilde y$ can be written as a linear \emph{diagonal} system \cite{ABLLMSX}; this suggests that the above approach based on integrating factors might be applicable to the corresponding calculation for \eqref{eq:gas}.
\end{remark}

\subsection{Coupled formulation}\label{ssec:auto}
\subsubsection{Description}
Rather than solve the problem in two steps (first $\bar u$ and then $\tilde y$), we consider now the problem of solving the coupled system for $\bar u$ and $\tilde y=w+\mi v$. Thus, we consider the autonomous system 
\begin{subequations}\label{eq:autosystem}
\begin{align}
\bar u' &=f^1(\bar u)-f^1(u_\sm)\,,\\
 w' & = a^1(\bar u) w\,, \\
 v' & = a^1(\bar u) v +\tau_\circ(\bar u-u_\sm)+\xi_\circ(f^2(\bar u)-f^2(u_\sm))\,. 
\end{align}
\end{subequations}
Our motivation for pursuing this approach is that, if the numerical methods involved do not take explicit advantage of the linear structure of the $\tilde y$ equation, for modestly sized systems it involves no extra work to solve the coupled system in a single step. On the other hand, the challenge in this case is to construct a suitable guess for the solver. However, once a suitable guess is found, the $\beta$ calculation can take advantage of continuation. For example, if $u_\sm$ is being moved along the Hugoniot curve, one can use the previously found solution as the initial guess for the next value of $u_\sm$; see Example \ref{ex:end}. Based on our experiments, this method works quite well. 

We write \eqref{eq:autosystem} as $U'=F(U)$, where $U=(\bar u,w,z)$. The desired solution is a heteroclinic orbit in the phase space $\RR^3$ which connects the equilibria $U_\spm=(u_\spm,0,0)$.
The linearization is straightforward; we see immediately that
\beq
\dif F(U)=
\begin{pmatrix}
a^1(\bar u) & 0 & 0 \\
\frac{\pd^2 f^1}{\pd \bar u^2} & a^1(\bar u) & 0 \\
\frac{\pd^2 f^1}{\pd\bar u^2}+\tau_\circ+\xi_\circ a^2(\bar u) & 0 & a^1(\bar u)
\end{pmatrix}\,.
\eeq
Evidently, the eigenvalues of $\dif F(U_\spm)$ are, with multiplicity three, simply $a^1(u_\spm)$. From \eqref{eq:lax} this gives a connection from a stable node to an unstable node in $\RR^3$. We expect then to introduce two parameters to completely parametrize the solutions; as noted in Remark~\ref{rem:ab}, an orthogonality condition will select a particular solution for the computation of $\beta$.

\subsubsection{Numerical Implementation}
We truncate the problem to the computational domain $[-L,L]$. Thus, the system becomes 
\beq
U'=F(U)\,,\quad x\in[-L,L]\,.
\eeq
At this point we expect to incorporate projective boundary conditions at $\pm L$. 
Next, for convenience, we double the variables and rescale the problem to the unit interval. Thus we consider the problem
\[
U_r'=LF(U_r)\,,\quad U_\ell'=-LF(U_\ell)\,,     \quad x\in[0,1]\,,
\]
where $U_r(x)=U(Lx)$ and $U_\ell(x)=U(-Lx)$.
We also implement the classical phase condition to remove the translational invariance from the problem. Folding over the solution makes it simple to include this as a boundary condition at $x=0$. Thus, in this case, we use one phase condition, three matching conditions, and two free parameters to determine the solution. The penultimate step is to generate an approximate solution, or a guess. In this case, we generate our guess by solving the corresponding initial-value problem. In practice, since this is a sink-source connection, it is not too difficult to generate a good guess by this method. Once a guess of sufficient quality is found; solutions for nearby parameter values can be found easily by continuation. Finally, we solve the boundary-value problem using \textsc{MatLab}'s routine \texttt{bvp5c}; this is a code that implements the four-stage Lobatto IIIa formula; this is a collocation formula. The collocation polynomial provides a $C^1$-continuous solution that is fifth-order accurate uniformly in the computational domain $[0,1]$. Sample solutions using this procedure are plotted in Figure \ref{fig:end} and Figure \ref{fig:compare}.

\begin{remark}
A natural question concerns the determination of the smallest size of $L$ that guarantees that the numerical approximation fully resolves the features of the true problem on $\RR$. Here, for the model problem that we consider, we are content to do so in an ad hoc way. Since our ultimate interest is in $\sgn\re\beta$, we merely verify that the value of this quantity is stable as $L$ is increased; we take this as  evidence that the computation is sufficiently resolved. See Table~\ref{tab:beta1}.
\end{remark}

\subsubsection{Examples}


\begin{example}[$f^1(u)=u^2/2$, $f^2(u)=u^2$]\label{ex:usquared}
This is the same as Example \ref{ex:exact}, and so there is an exact solution available. The system takes the form 
\begin{subequations}
\begin{align}
\bar u' & = \frac{1}{2}(\bar u^2-1)\,, \\
w' & = \bar u w\,, \\
v' & = \bar u v + \bar u^2-1\,.
\end{align}
\end{subequations}
The approximate solution computed using the method described in Section \ref{ssec:auto} with $L=20$ is plotted against the exact solution in Figure \ref{fig:compare}. We note that the 2-norm error between the approximate solution and the exact solution on their common domain is well-controlled by the built-in error control features of \texttt{bvp5c}. 
\end{example}

\begin{example}[Moving end state]\label{ex:end}
In this example, we take as before 
\(
f^1(u)=u^2/2,
\)
and we  take 
\[
f^2(u)=\sin(4\pi u)
\]
for the transverse flux $f^2$. Our aim in this example is to mimic the kind of calculation that one would do in practice, searching for the point in parameter space (e.g., the value of $u_\sm$) at which $\sgn\Re\beta$ changes sign. To that end, we systematically increase the value of $u_\sm$ and recompute the $\bar u$ and $y$ for each new value of the end state. This is precisely the kind of computation that is well suited for continuation. 
The results are plotted in Figure \ref{fig:end}.  
\begin{figure}[ht] 
   \centering
   \begin{tabular}{ccc}
  (a) \includegraphics[width=1.7in]{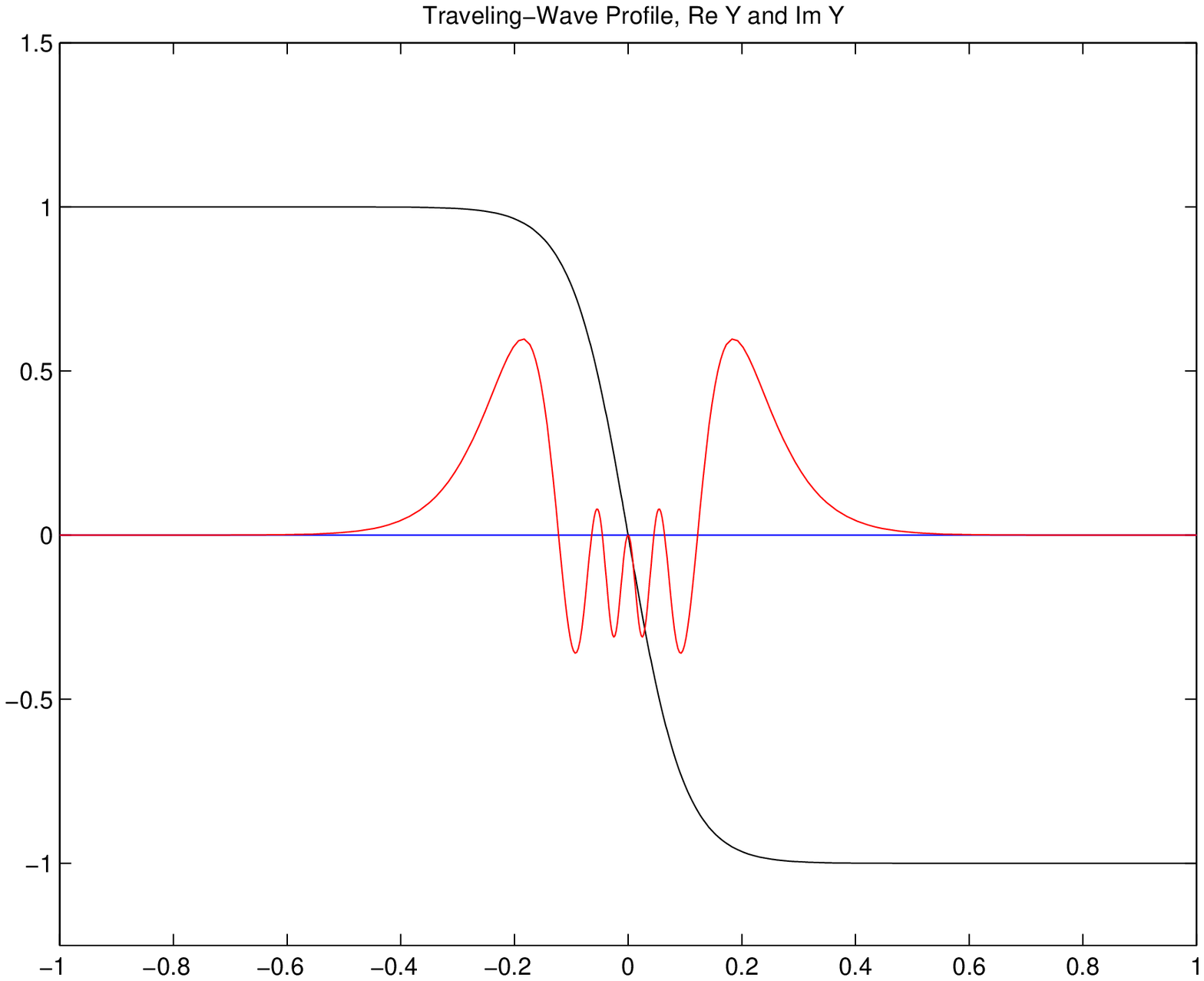} & (b) \includegraphics[width=1.7in]{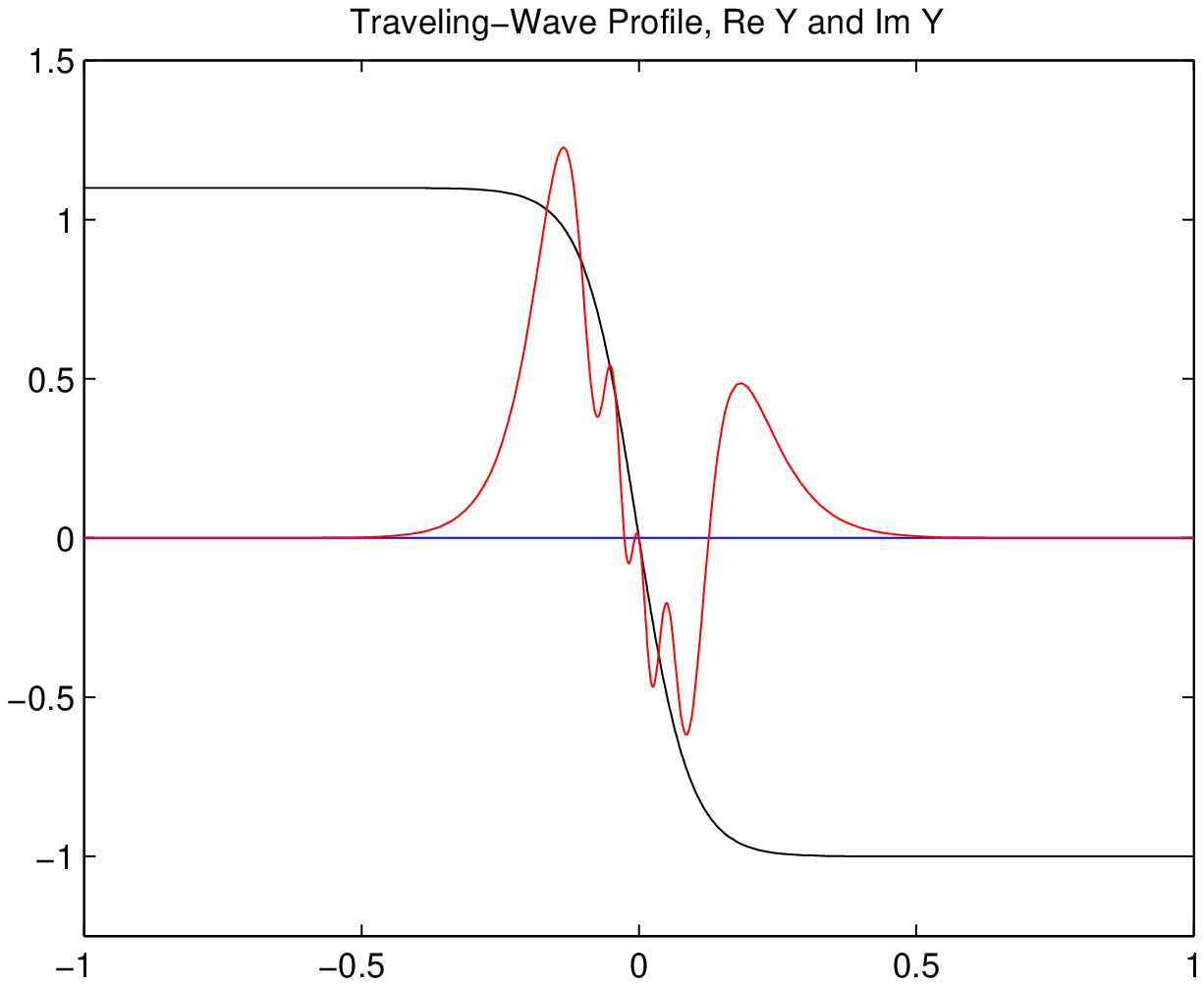} & (c) \includegraphics[width=1.7in]{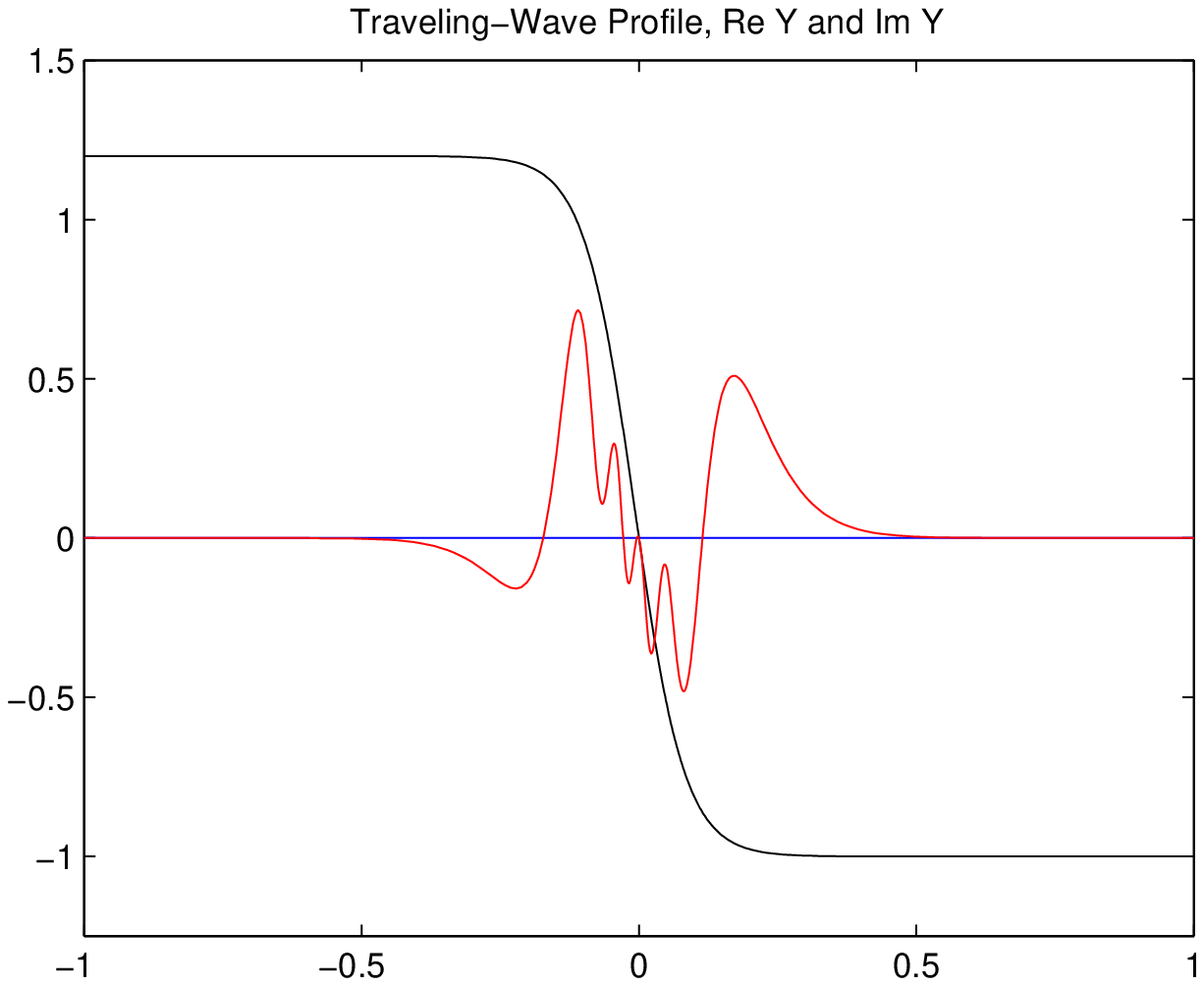} \\
   (d) \includegraphics[width=1.7in]{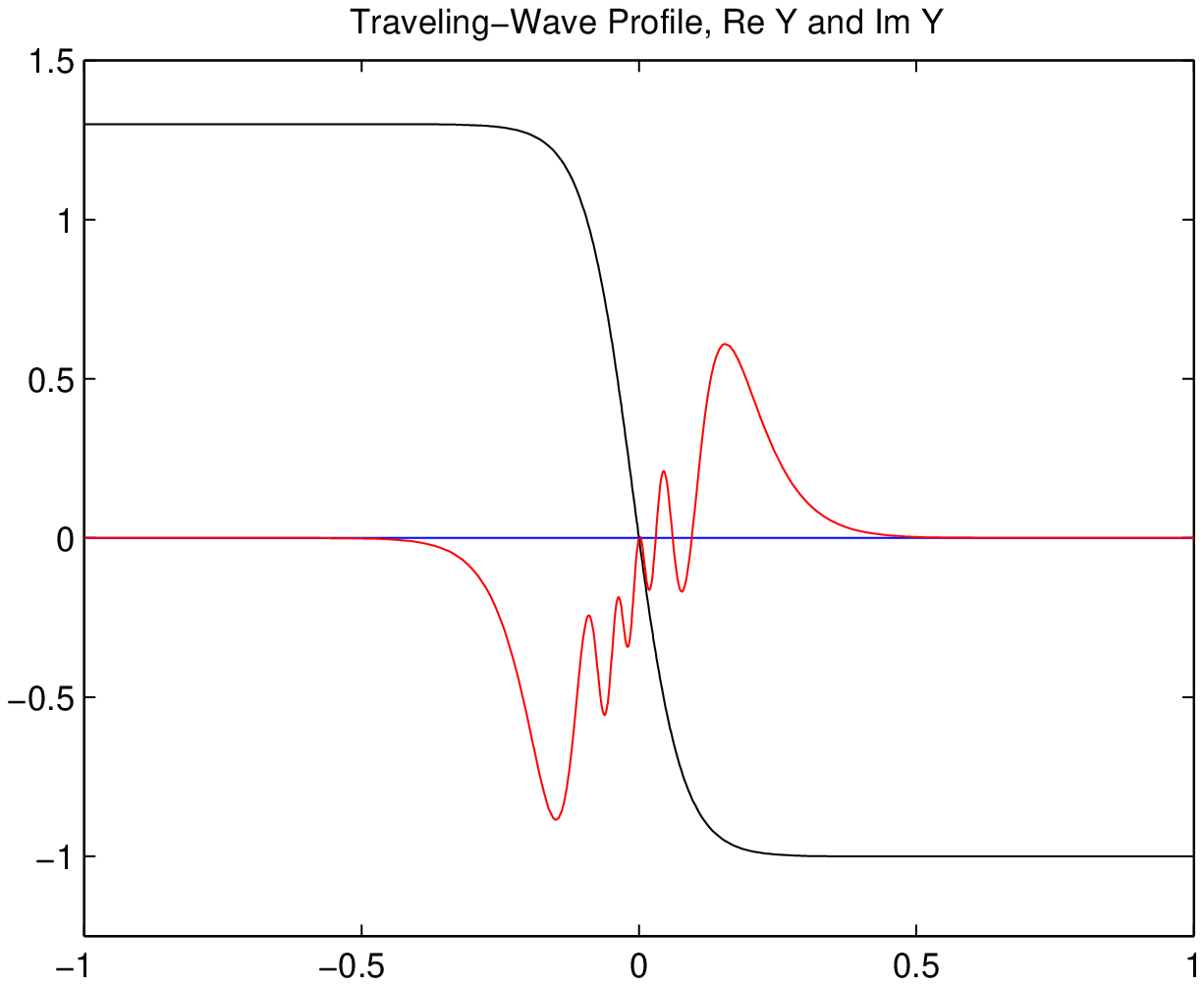} & (e) \includegraphics[width=1.7in]{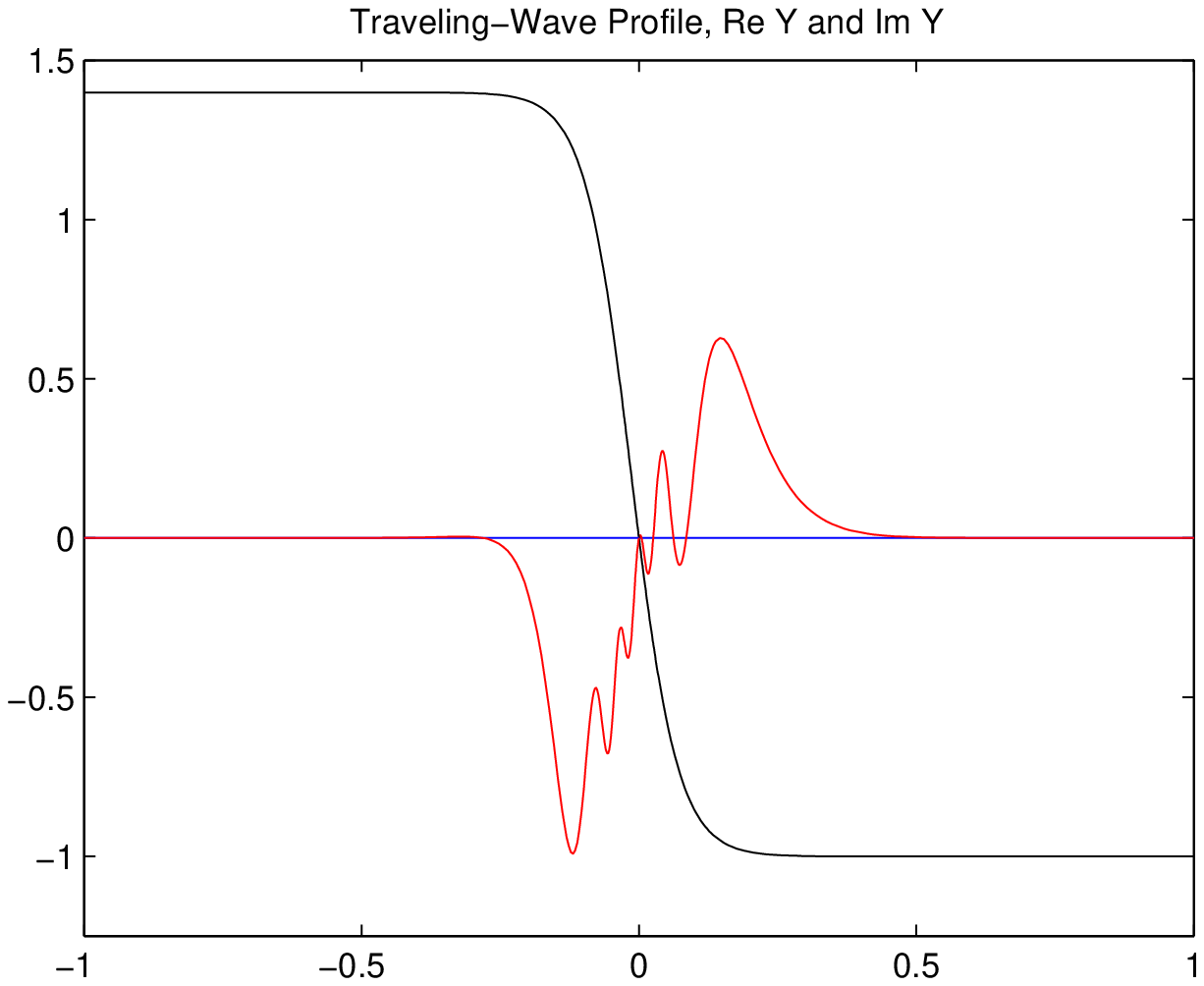} & (f)\includegraphics[width=1.7in]{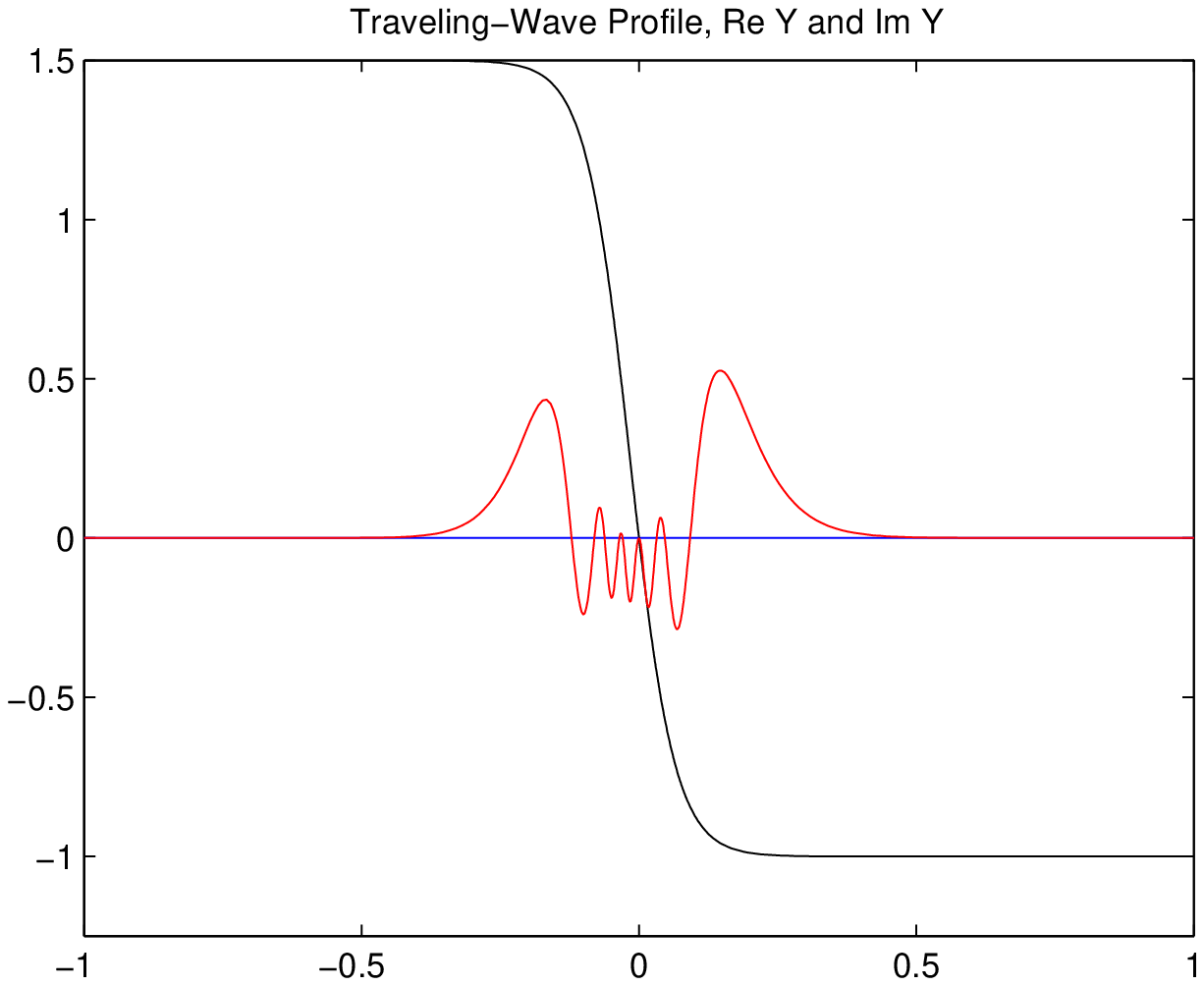}
   \end{tabular} 
   \caption{The effect of moving the left state. Black: profile $\bar u$, Blue: $w=\re \tilde y$, Red: $v=\im \tilde y$: (a) $u_\sm=1$. (b) $u_\sm=1.1$. (c) $u_\sm=1.2$. (d) $u_\sm=1.3$. (e) $u_\sm=1.4$. (f) $u_\sm=1.5$.}
   \label{fig:end}
\end{figure}
\end{example}

\subsection{Comparison of two methods}
In the context of the scalar conservation law \eqref{eq:law}, both methods work well. Our experiments show that some care should be exercised in approximating $\tilde y$ via the integrating factor method; in particular, the overall quality of the computation depends on the approximation of the integrals in \eqref{eq:v}. Our implementation uses Simpson's rule to approximate these integrals. It is worth noting that the profile $\bar u$ is computed independently in this first method; for example, the calculation does not take any account of the transverse flux $f^2$. By way of comparison, error control in the coupled formulation is ``automatic'' since we use built-in convergence tolerances in the package \texttt{bvp5c} to control the quality of the approximation across the entire computational domain; That is, $\bar u$ and $y$ are treated on the same footing, and this method takes account of the entire structure of the problem at each stage of the iteration. In Figure \ref{fig:compare} we plot the exact solution from Example \ref{ex:exact} against the approximate solutions obtained by the two methods. The caption of that figure records the error between the approximate solutions and the exact solution in the 2-norm, given by  
\beq\label{eq:norm}
\norm{x}{2}=\sqrt{\sum_{j=1}^Nx_j^2}\,.
\eeq

On the other hand, a distinction of note between the methods is in the complexity of the nonlinear two-point boundary-value problem that must be solved. Our technique for solving these problems by collocation hinges on finding or computing a suitable initial guess. This guess is used as the seed in an iterative solution of the nonlinear equations for the coefficients of the collocation polynomial. In the first method, based on the integrating factor, a relatively simple boundary-value problem needs to be solved; one expects that it is, generally, much easier to generate a good initial guess for this problem than to find a similarly good initial guess for the coupled formulation of the problem.
\begin{figure}[ht] 
   \centering
   \includegraphics[width=5.5in]{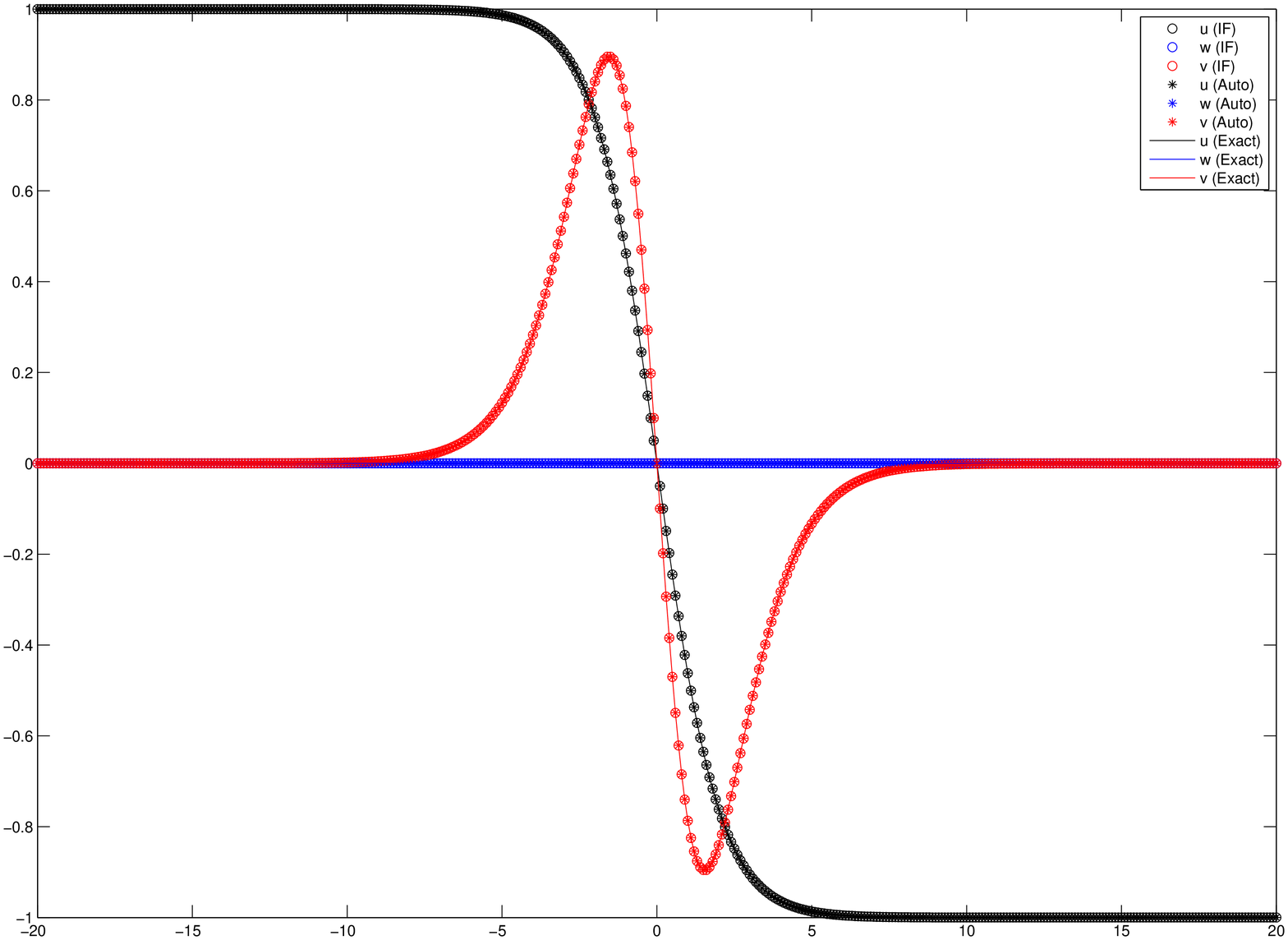} 
   \caption{The computed solutions (autonomous = stars, integrating factor = circles) plotted against the exact solution (solid lines) from Example \ref{ex:exact}. Here, $L=20$, and  $\norm{\bar u_\mathrm{auto}-\bar u_\mathrm{exact}}{2}$=1.0470e-07, $\norm{w_\mathrm{auto}-w_\mathrm{exact}}{2}$=0, and $\norm{ v_\mathrm{auto}- v_\mathrm{exact}}{2}$= 4.42128e-07. Similarly, $\norm{\bar u_\mathrm{int}-\bar u_\mathrm{exact}}{2}$=1.45990e-06, $\norm{w_\mathrm{int}-w_\mathrm{exact}}{2}$=0, and $\norm{ v_\mathrm{int}- v_\mathrm{exact}}{2}$= 2.79917e-04. The error in the autonomous formulation is controlled directly by the convergence criteria of \texttt{bvp5c} while the error in the IF formulation depends on the accuracy of the computed profile $\bar u$ and the quality of the approximation of the integrals in \eqref{eq:v}.
}
\label{fig:compare}
\end{figure}

\section{Conclusion}\label{sec:conclude}
\subsection{Calculating $\beta$}\label{ssec:betacalc}
Finally, with our approximations of $\bar u$ and $\tilde y$ in hand, we proceed to compute $\beta$. Examining \eqref{eq:beta1}, we observe that the formula for $\beta$ may be rewritten as 
\begin{align}
\beta &  =\frac{\Gamma\mathcal{I}}{\Gamma\Delta_\lambda}\nonumber\\
	&= \jump{u}^{-1}\int_{-\infty}^\infty 2(\mi\tau_\circ +\mi\xi_\circ a^2(\bar u))(w+\mi v)+2\xi_\circ^2\bar u'\,\dif x \nonumber\\
	&\approx \frac{2}{\jump{u}}\int_{-L}^L(\mi\tau_\circ +\mi\xi_\circ a^2(\bar u))(w+\mi v)+\xi_\circ^2(f^1(\bar u)-f^1(u_\sm)-s(\bar u-u_\sm))\,\dif x\,. \label{eq:beta-approx}
\end{align}
Thus, from \S\ref{ssec:if} and \S\ref{ssec:auto} we have two distinct ways of obtaining approximations for $\bar u$, $w$, and $v$ on the computational domain $[-L,L]$ in order to approximate the integral in \eqref{eq:beta-approx}. In Table \ref{tab:beta1} we compare the values of $\beta$ computed by each of the methods for the problem in Example \ref{ex:exact}. In each case, we approximate the integral in \eqref{eq:beta-approx} by the trapezoid rule, and we denote the different values obtained by $\beta_\mathrm{auto}$ and $\beta_\mathrm{int}$. 

\begin{table}
\begin{tabular}{l|ccc}
$L$ & 10 & 20 & 30 \\ \hline 
$\beta_\mathrm{auto}$ &  $9.9918+0.0000\mi$ & $10.0000+0.0000\mi$ & $10.0000+0.0000\mi$ \\
$\beta_\mathrm{int}$ & $9.9919+0.0000\mi$ & $10.0001+0.0000\mi$& $10.0001+0.0000\mi$ 
\end{tabular}
\caption{The computation of $\beta$ for Example \ref{ex:exact}: $\beta_\mathrm{exact}=10$.  }
\label{tab:beta1}
\end{table}

\subsection{Discussion}\label{ssec:discuss}
We have considered Zumbrun \& Serre's refined stability condition \cite{ZS_IUMJ99} in the simplest possible setting, and we have proposed and implemented two methods for practically computing the condition. Both methods work well in the present setting, and we believe that the approaches for computing $\tilde y$ from \S\ref{sec:compute} are potentially useful for interesting physical versions of this problem. Indeed, our principal interest is to use the model problem here as a stepping stone towards the analogous problem for a physical system ($n>1$) which possesses open set of weakly stable waves. For isentropic gas dynamics in two space dimensions \eqref{eq:gas}, an example of such a family has been given by Majda \cite{M_book}. In that case $n=3$, and the fundamental new issue that arises is the presence of \emph{slow modes}, see \cites{B-GSZ_SIAMJMA01,B-GSZ_ZAA08}. For example, in the case of a Lax $1$-shock, there are two outgoing characteristic directions, and this alters the calculation from the very beginning. For example, the computation of $\Delta$ is more involved (although it is still computable in closed form \cite{S_book}). In this case the Lopatinski\u\i\ determinant takes the form  
\beq\label{eq:gas_lop}
\Delta(\lambda,\xi)=\det(\vec{r}_2^\sp(\lambda,\xi),\vec{r}_3^\sp(\lambda,\xi),\lambda\jump{U}+\mi\xi\jump{f^2(U)})\,.
\eeq
where the vectors $\{\vec{r}_2^\sp(\lambda,\xi),\vec{r}_3^\sp(\lambda,\xi)\}$ are a basis for the unstable subspace of $\mathcal{A}_\sp$ where
\beq\label{eq:calAgas}
\mathcal{A}_\spm(\lambda,\xi)=\Big(\lambda \dif F^0(U_\spm)+\mi\xi\dif F^2(U_\spm)\Big)(\dif F^1(U_\spm))^{-1}\,,
\eeq
with $U=(\rho,u_1,u_2)^\mathrm{t}$ 
and
\beq\label{eq:flux}
F^0(U)=\begin{pmatrix} \rho \\ \rho u_1 \\ \rho u_2\end{pmatrix}\,,\quad
F^1(U)=\begin{pmatrix} \rho u_1 \\ \rho u_1^2+p(\rho) \\ \rho u_1 u_2\end{pmatrix}\,,\quad
F^2(U)=\begin{pmatrix} \rho u_2 \\ \rho u_1u_2 \\ \rho u_2^2 +p(\rho)\end{pmatrix}\,.
\eeq
In the fluxes $F^j$, the form of the pressure $p$ is prescribed as the equation of state---the constitutive relation that specifies the nature of the gas. Mathematical treatments of gas dynamics frequently take a ``$\gamma$-law'' gas\footnote{Of course, the parameter $\gamma$ appearing in \eqref{eq:gamma-law} is a physical constant and is not related to $\gamma=\Re\lam$ used throughout this paper. Similarly, the gas density $\rho$ appearing in \eqref{eq:gas} and \eqref{eq:flux} is not to be confused with the radial coordinate $\rho$ used in the Evans-function calculations in \S\ref{sec:evans}.}
\beq\label{eq:gamma-law}
p(\rho)=a_0\rho^\gamma\,,
\eeq
where $a_0$ is a positive constant and $\gamma>1$. For such a pressure law, there are no weakly stable shocks \cite{S_book}. However, thermodynamically admissible perturbations of a $\gamma$-law pressure function can open up regions of weak stability \cites{ABLLMSX,M_book}.
Additionally, the presence of slow modes introduces an additional boundary term $\mathcal{B}$, as described in \cite{B-GSZ_ZAA08}, into the formulation of $\beta$. The term $\mathcal{B}$ is associated with the absence of a spectral gap for the linearized operator $\mathscr{L}(0)$, and $\mathcal{B}$ is constructed from the right and left eigenvectors of $\mathcal{A}_\spm$ in \eqref{eq:calAgas}; in the case of \eqref{eq:gas}, these are known explicitly. Finally, in the physical cases, low-frequency behavior on $\pd H$ can be complicated due to the presence of nonempty \emph{glancing} and \emph{variable multiplicity} sets $\mathscr{G}$ and $\mathscr{V}$. In the case of \eqref{eq:gas} $\mathscr{V}$ is empty and $\mathscr{G}$ can be computed explicitly. The application of the ideas in \S\ref{sec:compute} to this problem is a topic of our current investigation.

\section*{Acknowledgement}
Research of all authors was supported in part by the National Science Foundation under grant number DMS-0845127.   
The authors would also like to thank Mark Ablowitz and Harvey Segur for a helpful suggestion.

\begin{bibdiv}
\begin{biblist}
\bib{ABLLMSX}{misc}{
	author={Anderson, N.},
	author={Bagley, S.},
	author={Lindgren, A},
	author={Lyng, G.},
	author={Mukherjee, S.},
	author={Swedberg, D.},
	author={Xu, M.},
	title={The refined stability condition for gas dynamics},
	status={in preparation},
	date={2012},
	}
%
\bib{BE_AM92}{article}{
   author={Barmin, A. A.},
   author={Egorushkin, S. A.},
   title={Stability of shock waves},
   language={English, with English and Russian summaries},
   journal={Adv. Mech.},
   volume={15},
   date={1992},
   number={1-2},
   pages={3--37},
   issn={1230-0853},
}

\bib{B-GS_book}{book}{
   author={Benzoni-Gavage, Sylvie},
   author={Serre, Denis},
   title={Multidimensional hyperbolic partial differential equations},
   series={Oxford Mathematical Monographs},
   note={First-order systems and applications},
   publisher={The Clarendon Press Oxford University Press},
   place={Oxford},
   date={2007},
   pages={xxvi+508},
   isbn={978-0-19-921123-4},
   isbn={0-19-921123-X},
}

\bib{B-GSZ_SIAMJMA01}{article}{
   author={Benzoni-Gavage, Sylvie},
   author={Serre, Denis},
   author={Zumbrun, Kevin},
   title={Alternate Evans functions and viscous shock waves},
   journal={SIAM J. Math. Anal.},
   volume={32},
   date={2001},
   number={5},
   pages={929--962 (electronic)},
   issn={0036-1410},
}
\bib{B-GSZ_ZAA08}{article}{
   author={Benzoni-Gavage, Sylvie},
   author={Serre, Denis},
   author={Zumbrun, Kevin},
   title={Transition to instability of planar viscous shock fronts: the
   refined stability condition},
   journal={Z. Anal. Anwend.},
   volume={27},
   date={2008},
   number={4},
   pages={381--406},
   issn={0232-2064},
 }

\bib{D_JETP54}{article}{
   author={D{\cprime}yakov, S. P.},
   title={On the stability of shock waves},
   language={Russian},
   journal={\v Z. Eksper. Teoret. Fiz.},
   volume={27},
   date={1954},
   pages={288--295},
}

\bib{E_PF62}{article}{
   author={Erpenbeck, Jerome J.},
   title={Stability of step shocks},
   journal={Phys. Fluids},
   volume={5},
   date={1962},
   pages={1181--1187},
   issn={0031-9171},
}

%

\bib{HLZ_ARMA09}{article}{
	author={Humpherys, J.},
	author={Lyng, G.},
	author={Zumbrun, K.},
	title={Spectral stability of ideal-gas shock layers},
	journal={Archive for Rational Mechanics and Analysis},
	volume={194},
	year={2009},
	pages={1029--1079}
}
	
%

%
%
\bib{M_book}{book}{
   author={Majda, A.},
   title={Compressible fluid flow and systems of conservation laws in
   several space variables},
   series={Applied Mathematical Sciences},
   volume={53},
   publisher={Springer-Verlag},
   place={New York},
   date={1984},
   pages={viii+159},
   isbn={0-387-96037-6},
}
%
%
%
%
%
\bib{MZ_MAMS05}{article}{
   author={M{\'e}tivier, Guy},
   author={Zumbrun, Kevin},
   title={Large viscous boundary layers for noncharacteristic nonlinear
   hyperbolic problems},
   journal={Mem. Amer. Math. Soc.},
   volume={175},
   date={2005},
   number={826},
   pages={vi+107},
   issn={0065-9266},
}
\bib{S_book}{book}{
   author={Serre, Denis},
   title={Systems of conservation laws. 2},
   note={Geometric structures, oscillations, and initial-boundary value
   problems;
   Translated from the 1996 French original by I. N. Sneddon},
   publisher={Cambridge University Press},
   place={Cambridge},
   date={2000},
   pages={xii+269},
   isbn={0-521-63330-3},
   }

%
\bib{Z_kochel}{article}{
   author={Zumbrun, Kevin},
   title={Multidimensional stability of planar viscous shock waves},
   conference={
      title={Advances in the theory of shock waves},
   },
   book={
      series={Progr. Nonlinear Differential Equations Appl.},
      volume={47},
      publisher={Birkh\"auser Boston},
      place={Boston, MA},
   },
   date={2001},
   pages={307--516},
}	
\bib{Z_handbook04}{article}{
   author={Zumbrun, Kevin},
   title={Stability of large-amplitude shock waves of compressible
   Navier-Stokes equations},
   note={With an appendix by Helge Kristian Jenssen and Gregory Lyng},
   conference={
      title={Handbook of mathematical fluid dynamics. Vol. III},
   },
   book={
      publisher={North-Holland},
      place={Amsterdam},
   },
   date={2004},
   pages={311--533},
}
\bib{Z_cime07}{article}{
   author={Zumbrun, Kevin},
   title={Planar stability criteria for viscous shock waves of systems with
   real viscosity},
   conference={
      title={Hyperbolic systems of balance laws},
   },
   book={
      series={Lecture Notes in Math.},
      volume={1911},
      publisher={Springer},
      place={Berlin},
   },
   date={2007},
   pages={229--326},
}

\bib{Z_PD10}{article}{
   author={Zumbrun, Kevin},
   title={The refined inviscid stability condition and cellular instability
   of viscous shock waves},
   journal={Phys. D},
   volume={239},
   date={2010},
   number={13},
   pages={1180--1187},
   issn={0167-2789},
   review={\MR{2644683 (2012e:35157)}},
   doi={10.1016/j.physd.2010.03.006},
}

\bib{ZS_IUMJ99}{article}{
   author={Zumbrun, K.},
   author={Serre, D.},
   title={Viscous and inviscid stability of multidimensional planar shock
   fronts},
   journal={Indiana Univ. Math. J.},
   volume={48},
   date={1999},
   number={3},
   pages={937--992},
   issn={0022-2518},
}
		
\end{biblist}
\end{bibdiv}

\end{document}